\numberwithin{equation}{section} 
\numberwithin{figure}{section} 
\theoremstyle{plain}
\theoremstyle{plain}
\newtheorem{thm}{Theorem}
  \theoremstyle{plain}
  \newtheorem{prop}[thm]{Proposition}
  \theoremstyle{remark}
  \newtheorem{acknowledgement}[thm]{Acknowledgement}
  \theoremstyle{definition}
  \newtheorem{defn}[thm]{Definition}
  \theoremstyle{plain}
  \newtheorem{lem}[thm]{Lemma}
  \theoremstyle{remark}
  \newtheorem{notation}[thm]{Notation}
  \theoremstyle{remark}
  \newtheorem{rem}[thm]{Remark}
  \theoremstyle{plain}
  \newtheorem{cor}[thm]{Corollary}
\begin{document}
\newcommand{\Alb}{{\rm Alb}}

\newcommand{\Jac}{{\rm Jac}}

\newcommand{\Hom}{{\rm Hom}}

\newcommand{\End}{{\rm End}}

\newcommand{\Aut}{{\rm Aut}}

\newcommand{\NS}{{\rm NS}}

\newcommand{\SSm}{{\rm S}}

\newcommand{\Ker}{{\rm Ker}}

\title{The Fano surface of the Klein cubic threefold}

\author{Xavier Roulleau}

\maketitle

Key-words: Fano surface of a cubic threefold, Automorphisms, Maximal
Picard number, Intermediate Jacobians.

MSC (2000): 14J29, 14J50 (primary); 14J70, 32G20 (secondary).
\begin{abstract}
We prove that the Klein cubic threefold $F$ is the only smooth cubic
threefold which has an automorphism of order $11$. We compute the
period lattice of the intermediate Jacobian of $F$ and study its
Fano surface $S$. We compute also the set of fibrations of $S$ onto
a curve of positive genus and the intersection between the fibres
of these fibrations. These fibres generate an index $2$ sub-group
of the Néron-Severi group and we obtain a set of generators of this
group. The Néron-Severi group of $S$ has rank $25=h^{1,1}$ and discriminant
$11^{10}$.
\end{abstract}

\section{Introduction.}

Let $F\hookrightarrow\mathbb{P}^{4}$ be a smooth cubic threefold.
Its intermediate Jacobian \[
J(F):=H^{2,1}(F,\mathbb{C})^{*}/H_{3}(F,\mathbb{Z})\]
 is a $5$ dimensional principally polarized abelian variety $(J(F),\Theta)$
that plays in the analysis of curves on $F$ a similar role to the
one played by the Jacobian variety in the study of divisors on a curve.
\\
The set of lines on $F$ is parametrized by the so-called Fano
surface of $F$ which is a smooth surface of general type that we
will denote by $S$. The Abel-Jacobi map $\vartheta:S\rightarrow J(F)$
is an embedding that induces an isomorphism $\Alb(S)\rightarrow J(F)$
where \[
\Alb(S):=H^{0}(\Omega_{S})^{*}/H_{1}(S,\mathbb{Z})\]
 is the Albanese variety of $S$, $\Omega_{S}$ is the cotangent bundle
and $H^{0}(\Omega_{S}):=H^{0}(S,\Omega_{S})$ (see \cite{Clemens}
0.6, 0.8 for details). 

The tangent bundle theorem (\cite{Clemens} Theorem 12.37) enables
to recover the cubic $F$ from its Fano surface. Moreover it gives
a natural isomorphism between the spaces $H^{0}(\Omega_{S})$ and
$H^{0}(F,\mathcal{O}_{F}(1))=H^{0}(\mathbb{P}^{4},\mathcal{O}_{\mathbb{P}^{4}}(1))$.
As we mainly work with the Fano surface, we will identify the basis
$x_{1},\dots,x_{5}$ of $H^{0}(\mathbb{P}^{4},\mathcal{O}_{\mathbb{P}^{4}}(1))$
with elements of $H^{0}(\Omega_{S})$ (for more explanations about
these facts, see the discussion after Proposition \ref{groupe d'ordre 11}).
We will also identify the abelian variety $J(F)$ with $\Alb(S)$.

In \cite{Roulleau}, we give the classification of elliptic curve
configurations on a Fano surface. It is proved that this classification
is equivalent to the classification of the automorphism sub-groups
of $S$ that are generated by certain involutions. Moreover, it is
also proved that the automorphism groups of a cubic and its Fano surface
are isomorphic. \\
In the present paper, we pursue the study of these groups. By Lemma
\ref{odre du groupe d'autom} below, the order of the automorphism
group $\Aut(S)$ of the Fano surface divides $11\cdot7\cdot5^{2}3^{6}2^{23}.$
\\
This legitimates the study of the Fano surfaces which have automorphisms
of order $7$ or $11$. A. Adler \cite{Adler} has proved that the
automorphism group of the Klein cubic: \[
F:\, x_{1}x_{5}^{2}+x_{5}x_{3}^{2}+x_{3}x_{4}^{2}+x_{4}x_{2}^{2}+x_{2}x_{1}^{2}=0\]
is isomorphic to $PSL_{2}(\mathbb{F}_{11})$. We prove that:
\begin{prop}
\label{groupe d'ordre 11}A smooth cubic threefold has no automorphism
of order $7$.\\
The Klein cubic is the only one smooth cubic threefold which has
an automorphism of order $11$.
\end{prop}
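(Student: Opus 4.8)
The plan is to analyze automorphisms of a smooth cubic threefold $F \subset \mathbb{P}^4$ by passing to the induced action on $\mathbb{P}^4$, which linearizes the problem. Since $\operatorname{Aut}(F)$ is isomorphic to the subgroup of $\operatorname{PGL}_5(\mathbb{C})$ preserving the cubic hypersurface, an automorphism $g$ of prime order $p \in \{7, 11\}$ lifts to an element $\tilde g \in \operatorname{GL}_5(\mathbb{C})$ of order $p$ (or $p$ times a root of unity, which we absorb). I would first diagonalize $\tilde g$, writing its action on coordinates as $x_i \mapsto \zeta^{a_i} x_i$ for a primitive $p$-th root of unity $\zeta$ and exponents $a_i \in \mathbb{Z}/p$. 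The defining cubic $f$ must then be a sum of monomials each of which is an eigenvector for $g$ with the same eigenvalue, i.e. all monomials appearing in $f$ satisfy $a_i + a_j + a_k \equiv c \pmod p$ for a fixed constant $c$.

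The core combinatorial step is to determine, for each prime $p$, which systems of weights $(a_1,\dots,a_5)$ admit enough monomials to make $f$ define a \emph{smooth} cubic. The smoothness constraint is the real engine: a diagonalizable automorphism forces $f$ to be a weighted-homogeneous polynomial, and smoothness of the projective hypersurface requires that $f$ and all its partials $\partial f/\partial x_i$ have no common zero. In practice this means each variable $x_i$ must appear in $f$ in a way that prevents a coordinate subspace from lying in the singular locus; a clean sufficient criterion is that for each $i$ there is a monomial of the form $x_i^2 x_j$ or $x_i^3$ in $f$. I would enumerate the possible multisets of exponents $\{a_i\}$ modulo the $(\mathbb{Z}/p)^*$-action (rescaling $\zeta$) and modulo permutation of coordinates, and for each candidate check whether a smooth invariant cubic exists. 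For $p=7$ the claim is that no weight system yields a smooth invariant cubic, so the enumeration terminates with an empty list; for $p=11$ the claim is that exactly one weight system works, and it forces $f$ to be projectively equivalent to the Klein cubic.

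I expect the main obstacle to be the enumeration itself: controlling the number of cases. The key simplification is that smoothness requires, for each index $i$, at least one monomial in which $x_i$ appears with exponent $\geq 2$ (otherwise the partial derivatives all vanish along the locus $x_i$-axis and one finds a singular point). Concretely, for each $i$ one needs a relation $2a_i + a_j \equiv c$ or $3a_i \equiv c$ modulo $p$; together these five relations plus the requirement that the five chosen monomials be ``independent enough'' severely constrain the weights. The Klein cubic's own shape, $x_1 x_5^2 + x_5 x_3^2 + x_3 x_4^2 + x_4 x_2^2 + x_2 x_1^2$, suggests looking for a cyclic pattern of monomials $x_{i} x_{\sigma(i)}^2$, which for a $5$-cycle $\sigma$ and weight system $a_i$ imposes $a_i + 2 a_{\sigma(i)} \equiv c$; solving this linear recurrence modulo $p$ and demanding order exactly $p$ pins down $p=11$ and the Klein weights uniquely.

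Once the weight analysis isolates the unique $p=11$ solution (and rules out $p=7$), the remaining work is to verify that the space of invariant cubics for the surviving weight system is one-dimensional, or more generally that every smooth member of it is projectively equivalent to the Klein cubic; this should follow by exhibiting the invariant monomials explicitly and checking that imposing smoothness leaves a single projective equivalence class. The transfer back to the Fano surface is immediate, since $\operatorname{Aut}(S) \cong \operatorname{Aut}(F)$ as recalled in the introduction, so an order-$7$ automorphism of $S$ would induce one of $F$, and an order-$11$ automorphism of $S$ forces $F$ to be Klein.
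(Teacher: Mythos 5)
Your approach is correct but genuinely different from the paper's. You work directly on the cubic: linearize the automorphism (every automorphism of a smooth cubic threefold is induced by $PGL_5$), diagonalize with weights $a_1,\dots,a_5\in\mathbb{Z}/p$, and use the classical singularity criterion that a smooth invariant cubic must contain, for each $i$, a monomial $x_i^3$ or $x_i^2x_j$. The paper instead passes to the intermediate Jacobian: for $p=11$ it invokes the classification (via complex multiplication theory, from Gonz\`alez-Aguilera et al.) of the four five-dimensional p.p.a.v.'s with an order-$11$ automorphism, eliminates two because they are Jacobians of curves (and intermediate Jacobians are not), eliminates a third by an eigenspace computation on $\SSm^3$ of the representation, and concludes by the Torelli theorem; for $p=7$ it first constrains the eigenvalues to $1,1,\mu,\mu^a,\mu^b$ using Birkenhake--Lange and then checks two cases of the monomial decomposition. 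Your route is more elementary and self-contained (no Torelli theorem, no classification of p.p.a.v.'s), and in fact closes cleanly: after normalizing the common weight $c$ of the monomials to $0$ (shift all $a_i$ by $t$ with $3t\equiv -c$), the smoothness criterion says the set $V$ of nonzero weights satisfies $-2V=V$, i.e.\ $V$ is a union of orbits of multiplication by $-2$ on $(\mathbb{Z}/p)^*$; since $-2$ has order $6$ mod $7$, no such $V$ with at most $5$ elements exists except $V=\emptyset$ (trivial action), killing $p=7$, while $-2$ has order $5$ mod $11$ with two orbits interchanged by rescaling $\zeta$, forcing the weights to be exactly $\{1,9,3,4,5\}$ up to equivalence, whose only invariant monomials are the five Klein monomials, each of which must appear with nonzero coefficient and can then be normalized to $1$. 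I would urge you to make this orbit argument and the final monomial list explicit, since as written your enumeration is only sketched; with that done your proof is complete and arguably simpler, whereas the paper's version has the advantage of using the same abelian-variety toolkit (analytic representations of automorphisms of $(J(F),\Theta)$) that drives the rest of the article.
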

If a curve admits a sufficiently large group of automorphisms, Bolza
has given a method to compute a period matrix of its Jacobian (see
\cite{Birkenhake}, 11.7). As for the case of curves, we use the fact
that the Klein cubic $F$ admits a large group of automorphisms to
compute the period lattice of its intermediate Jacobian $J(F)$ or,
what is the same thing, the period lattice $H_{1}(S,\mathbb{Z})\subset H^{0}(\Omega_{S})^{*}$
of the two dimensional variety $S$.

To state the main result of this work, we need some notations:\\
Let be $\nu=\frac{-1+i\sqrt{11}}{2}$ where $i\in\mathbb{C},\, i^{2}=-1$
and let $\mathbb{E}$ be the elliptic curve $\mathbb{E}=\mathbb{C}/\mathbb{Z}[\nu]$.
Let us denote by $e_{1},\dots,e_{5}\in H^{0}(\Omega_{S})^{*}$ the
dual basis of $x_{1},\dots,x_{5}$. Let be $\xi=e^{\frac{2i\pi}{11}}$
and for $k\in\mathbb{Z}/11\mathbb{Z}$, let $v_{k}\in H^{0}(\Omega_{S})^{*}$
be : \[
v_{k}=\xi^{k}e_{1}+\xi^{9k}e_{2}+\xi^{3k}e_{3}+\xi^{4k}e_{4}+\xi^{5k}e_{5}.\]

\begin{thm}
\label{le r=0000E9seau de A de klein}1) The period lattice $H_{1}(S,\mathbb{Z})\subset H^{0}(\Omega_{S})^{*}$
of the Fano surface of the Klein cubic is equal to: \[
\Lambda=\frac{\mathbb{Z}[\nu]}{1+2\nu}(v_{0}-3v_{1}+3v_{2}-v_{3})+\frac{\mathbb{Z}[\nu]}{1+2\nu}(v_{1}-3v_{2}+3v_{3}-v_{4})+\bigoplus_{k=0}^{2}\mathbb{Z}[\nu]v_{k}\]
and the first Chern class of the Theta divisor is: $c_{1}(\Theta)=\frac{i}{\sqrt{11}}\sum_{j=1}^{j=5}dx_{j}\wedge d\bar{x}_{j}$.\\
2) The Néron-Severi group $\NS(S)$ of $S$ has rank $25=h^{1,1}(S)$
and discriminant $11^{10}$. \\
3) Let $\vartheta:S\rightarrow\Alb(S)$ be an Albanese morphism
and $\NS(\Alb(S))$ be the Néron-Severi group of $\Alb(S)$. The set
of numerical classes of fibres of connected fibrations of $S$ onto
a curve of positive genus is in natural bijection with $\mathbb{P}^{4}(\mathbb{Q}(\nu))$
and the fibres of these fibrations generate rank $25$ sub-lattice
$\vartheta^{*}\NS(\Alb(S))$.
\end{thm}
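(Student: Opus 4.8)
\emph{Strategy.} The engine of the whole theorem is the order-$11$ automorphism $\sigma\in\Aut(S)$ furnished by Proposition~\ref{groupe d'ordre 11} and the isomorphism $\Aut(S)\cong\Aut(F)\cong PSL_{2}(\mathbb{F}_{11})$. First I would read off the action of $\sigma$ on $H^{0}(\Omega_{S})\cong H^{0}(\mathbb{P}^{4},\mathcal{O}(1))$ directly from the Klein equation: in the basis $x_{1},\dots,x_{5}$ it is diagonal with distinct eigenvalues among the powers of $\xi$, so that the $v_{k}$ are $\sigma$-eigenvectors on $H^{0}(\Omega_{S})^{*}$ and, after fixing the normalisation, $\sigma v_{k}=v_{k+1}$ with indices mod $11$. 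The five relevant exponents $\{1,9,3,4,5\}$ are precisely the quadratic residues modulo $11$, and since the Gauss sum $\sum_{a}\left(\tfrac{a}{11}\right)\xi^{a}$ equals $\sqrt{-11}$, this CM-type is the one induced from the quadratic subfield $\mathbb{Q}(\sqrt{-11})=\mathbb{Q}(\nu)\subset\mathbb{Q}(\xi)$. Hence $J(F)=\Alb(S)$ is isogenous to $\mathbb{E}^{5}$; this is the structural fact I would establish first, and it explains why $\mathbb{Z}[\nu]$, rather than $\mathbb{Z}[\xi]$, governs the arithmetic of $\Lambda$ and of $\NS$.

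For part~1, $\sigma$ acts on the rank-$10$ lattice $H_{1}(S,\mathbb{Z})$ with minimal polynomial the cyclotomic $\Phi_{11}$ and no eigenvalue $1$, so this lattice is a rank-one module over $\mathbb{Z}[\sigma]\cong\mathbb{Z}[\xi]$ (abstractly free, since $\mathbb{Q}(\xi)$ has class number $1$); the task is to fix its exact position using the polarization. The form $c_{1}(\Theta)$ is $\Aut(S)$-invariant, and on the irreducible $5$-dimensional representation the invariant positive Hermitian form is unique up to a real scalar, so $c_{1}(\Theta)=\lambda\sum_{j}dx_{j}\wedge d\bar{x}_{j}$; the value $\lambda=\tfrac{1}{\sqrt{11}}$ is then forced by principality, i.e. by demanding that $\mathrm{Im}\,c_{1}(\Theta)$ be unimodular on $H_{1}(S,\mathbb{Z})$. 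Carrying this out in the $\mathbb{E}^{5}$-picture, where the polarization becomes a $\mathbb{Z}[\nu]$-Hermitian form, determines $\Lambda$ essentially uniquely. I note that the coefficients $1,-3,3,-1$ are those of $(1-\sigma)^{3}$, so the two special generators are $(1-\sigma)^{3}v_{0}$ and $\sigma(1-\sigma)^{3}v_{0}$, while $1+2\nu=\sqrt{-11}$ is the prime ramified over $11$; the precise exponent $3$ and the single factor $\sqrt{-11}$ record how the different of $\mathbb{Q}(\xi)/\mathbb{Q}$ descends to $\mathbb{Q}(\nu)$. Producing these exact coefficients, rather than merely the commensurability class of $\Lambda$, is the computational heart of the argument and the step I expect to be the \emph{main obstacle}.

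I would prove part~3 before part~2. A connected fibration $\pi:S\to C$ onto a curve of positive genus factors through the Albanese by its universal property, hence is governed by an abelian subvariety of $J(F)$: the connected component of a general fibre maps to a codimension-one abelian subvariety $B$, with complementary quotient an elliptic curve. Because $\End^{0}(J(F))$ contains $M_{5}(\mathbb{Q}(\nu))$ coming from $J(F)\sim\mathbb{E}^{5}$, these subvarieties correspond up to isogeny to $\mathbb{Q}(\nu)$-subspaces of $\mathbb{Q}(\nu)^{5}$, and the one-dimensional directions to points of $\mathbb{P}^{4}(\mathbb{Q}(\nu))$; checking that the numerical class of the fibre depends only on this direction gives the asserted natural bijection. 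Each fibre class equals $\vartheta^{*}[B]$, a class in $\vartheta^{*}\NS(\Alb(S))$, and since the classes $[B]$ run through rank-one Hermitian directions, which span the space of all $\mathbb{Z}[\nu]$-Hermitian forms, they generate $\vartheta^{*}\NS(\Alb(S))$.

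Finally, for part~2 I would identify $\NS(\Alb(S))$ with the lattice of $5\times5$ Hermitian forms over $\mathbb{Z}[\nu]$, of rank $5+2\binom{5}{2}=25=h^{1,1}(S)$. Since $J(F)$ is of CM type these classes are algebraic and $\vartheta^{*}:H^{2}(J(F),\mathbb{Q})\hookrightarrow H^{2}(S,\mathbb{Q})$ is injective, so the rank-$25$ space $\vartheta^{*}\NS(\Alb(S))\otimes\mathbb{Q}$ forces $\rho(S)=h^{1,1}(S)=25$. To get the discriminant I would use the minimal-class property $\vartheta_{*}[S]=\tfrac{1}{3!}\Theta^{3}$ of the Fano surface, which turns intersection numbers of pulled-back classes into cup products in $H^{\bullet}(J(F))$ against $\tfrac{1}{6}\Theta^{3}$; evaluating these from the Hermitian lattice gives the Gram matrix, and combining with the index-$2$ inclusion $\vartheta^{*}\NS(\Alb(S))\subset\NS(S)$ yields $\mathrm{disc}\,\NS(S)=11^{10}$.
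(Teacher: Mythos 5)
There is a genuine gap, and it sits exactly at the heart of part 1. You correctly observe that $H_{1}(S,\mathbb{Z})$ is a free rank-one $\mathbb{Z}[\xi]$-module, that the $\Aut(S)$-invariant Hermitian form must be a scalar multiple of the identity, and that the CM type $\{1,9,3,4,5\}$ is the one induced from $\mathbb{Q}(\nu)$; but the assertion that principality then ``determines $\Lambda$ essentially uniquely'' \emph{is} the content of the theorem, and you leave it unproved -- indeed you flag it yourself as the main obstacle. What is missing is a mechanism for reducing to a finite, checkable list of candidate lattices in the explicit coordinates $v_{k}$. The paper obtains this from the \emph{order-5} automorphism $g:(z_{1}:\dots:z_{5})\mapsto(z_{5}:z_{1}:z_{4}:z_{2}:z_{3})$, which your proposal never uses: the projector $q_{1}$, the homomorphism part of $\sum_{k=0}^{4}(\sigma')^{k}$, cuts out the elliptic curve $\mathbb{E}$, yields $H_{1}(S,\mathbb{Z})\cap\mathbb{C}v_{0}=\mathbb{Z}[\nu]v_{0}$ after normalization, and hence sandwiches $H_{1}(S,\mathbb{Z})$ between $\Lambda_{0}=\bigoplus_{k=0}^{4}\mathbb{Z}[\nu]v_{k}$ and $\Lambda_{4}=\{z:\ell_{k}(z)\in\mathbb{Z}[\nu]\}$. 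Since $M_{\tau}$ acts on $\Lambda_{4}/\Lambda_{0}\cong\mathbb{F}_{11}^{4}$ as a single unipotent Jordan block, only five intermediate lattices $\Lambda_{0},\dots,\Lambda_{4}$ are stable; the squares of their Pfaffians are $a^{10}11^{4-2j}$, and principality forces $j=2$ and $a=1$, which pins down $\Lambda$ and $c_{1}(\Theta)$ simultaneously (your plan to fix the scalar $\lambda$ ``by unimodularity'' is circular until the candidate lattices are listed). Even granting that the CM type abstractly determines the principally polarized lattice up to isomorphism -- essentially the classification of Gonz\`alez et al.\ already invoked in Proposition \ref{groupe d'ordre 11} -- the theorem asks for the lattice in the coordinates $v_{k}$, with the precise generators $\frac{1}{1+2\nu}(v_{0}-3v_{1}+3v_{2}-v_{3})$, and your outline does not deliver that.

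A secondary gap is in part 3: you pass from a connected fibration $S\to C$ of positive genus directly to an elliptic quotient of $\Alb(S)$, but nothing in your argument excludes $g(C)\geq 2$ (a genus-two base would give $\Jac(C)$ as a two-dimensional quotient of $\mathbb{E}^{5}$, perfectly compatible with the CM structure). The paper closes this with the Castelnuovo argument, using that $\wedge^{2}H^{0}(\Omega_{S})\to H^{0}(S,\wedge^{2}\Omega_{S})$ is an isomorphism for a Fano surface. Your outlines of part 2 and of the intersection computation (via $[S]=\Theta^{3}/3!$ and the identification of $\NS(\Alb(S))$ with $\mathbb{Z}[\nu]$-Hermitian matrices, plus the index-2 inclusion into $\NS(S)$) do match the paper's route, but they rest on the explicit lattice and on $a=1$ from part 1, so they inherit the gap above.
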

Actually, the period lattice is equal to $c\Lambda$ where $c\in\mathbb{C}$
is a constant, but we can normalize $e_{1},\dots,e_{5}$ in such a
way that $c=1$, see Remark \ref{rem:Up-to-a}.

A set of generators of $\NS(S)$ and a formula for their intersection
numbers are given in Theorem \ref{la forme Q pour 11}. 

We remark that $J(F)\simeq\Alb(S)$ is isomorphic to $\mathbb{E}^{5}$
but by \cite{Clemens} (0.12), this isomorphism is not an isomorphism
of principally polarized abelian varieties (p.p.a.v.). The fact that
$J(F)$ is isomorphic to $\mathbb{E}^{5}$ is proved in \cite{Adler1}
in a different way.

The main properties used to prove Theorem \ref{le r=0000E9seau de A de klein}
are the fact that the action of the group $\Aut(S)$ on $\Alb(S)$
preserves the polarization $\Theta$ and the fact that the class of
$S\hookrightarrow\Alb(S)$ is equal to $\frac{1}{3!}\Theta^{3}$.
We use also the knowledge of the analytic representation of the automorphisms
of the p.p.a.v. $(\Alb(S),\Theta)$.

To close this introduction, let us mention two known facts on this
cubic: (1) the cotangent sheaf of its Fano surface is ample \cite{Roulleau},
(2) as the plane Klein quartic, the Klein cubic threefold has a modular
interpretation \cite{Gross} (about the analogy with the Klein quartic,
see also Remark \ref{quartique de Klein}). 
\begin{acknowledgement}
The author wishes to thank the referee for his numerous valuable remarks
on a previous version of this paper.\\
This work was done at the Max-Planck Institut of Bonn, which is
also acknowledged.
\end{acknowledgement}

\section{the unique cubic with an automorphism of order $11$.\label{le parag auto 11}\label{parag rappels notations auto}}

Let us recall some facts proved in \cite{Roulleau} and fix the notations
and conventions:
\begin{defn}
A morphism between two abelian varieties $f:A\rightarrow B$ is the
composition of a homomorphism of Abelian varieties $g:A\rightarrow B$
and a translation. We call $g$ the \emph{homomorphism part} of $f$.
The differential $dg:T_{A}(0)\rightarrow T_{B}(0)$ at the point $0$
is called the \emph{analytic representation} of both $f$ and $g$,
where $T_{A}(0)$ and $T_{B}(0)$ denote the tangent spaces of $A$
and $B$ at $0$.
\end{defn}
An automorphism $f$ of a smooth cubic $F\hookrightarrow\mathbb{P}^{4}$
preserves the lines on $F$ and induces an automorphism $\rho(f)$
of the Fano surface $S$. \\
An automorphism $\sigma$ of $S$ induces an automorphism $\sigma'$
of the Albanese variety of $S$ such that the following diagram:\[
\begin{array}{ccc}
S & \stackrel{\vartheta}{\rightarrow} & \Alb(S)\\
\sigma\downarrow &  & \sigma'\downarrow\\
S & \stackrel{\vartheta}{\rightarrow} & \Alb(S)\end{array}\]
is commutative (where $\vartheta:S\rightarrow\Alb(S)$ is a fixed
Albanese morphism). The tangent space of the Albanese variety $\Alb(S)$
is $H^{0}(\Omega_{S})^{*}$. We denote by $M_{\sigma}\in GL(H^{0}(\Omega_{S})^{*})$
the analytic representation of $\sigma'$ : it is the dual of the
pull-back $\sigma^{*}:H^{0}(\Omega_{S})\rightarrow H^{0}(\Omega_{S})$.
We denote by \[
q:GL(H^{0}(\Omega_{S})^{*})\rightarrow PGL(H^{0}(\Omega_{S})^{*})\]
the natural quotient map. 
\begin{thm}
\label{isomorphisme entre aut S et aut F}1) For $\sigma\in\Aut(S)$,
the homomorphism part of the automorphism $\sigma'$ is an automorphism
of the p.p.a.v. $(\Alb(S),\Theta)$.\\
2) Let $M$ be the analytic representation of an automorphism of
$(\Alb(S),\Theta)$, then $q(M)$ is an automorphism of $F\hookrightarrow\mathbb{P}^{4}=\mathbb{P}(H^{0}(\Omega_{S})^{*})$.\\
3) The morphism $\rho:\Aut(F)\rightarrow\Aut(S)$ is an isomorphism
and its inverse is the morphism : $\Aut(F)\rightarrow\Aut(S);\,\sigma\mapsto q(M_{\sigma})$.
\\
4) The group $\Aut(S)$ is a sub-group of $\Aut(\Alb(S),\Theta)$.
If $S$ is a generic Fano surface, then: \[
\Aut(F)\simeq\Aut(S)\simeq\Aut(\Alb(S),\Theta)/\left\langle [-1]\right\rangle .\]

\end{thm}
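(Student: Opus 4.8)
The plan is to handle the four assertions in order, concentrating the real content in (1) and (2) and reducing (3)--(4) to bookkeeping. For (1), functoriality of the Albanese construction produces $\sigma'$ together with its homomorphism part, call it $g$; since $\vartheta\circ\sigma=\sigma'\circ\vartheta$, the automorphism $\sigma'$ sends $\vartheta(S)$ to $\vartheta(\sigma(S))=\vartheta(S)$, so (translations acting trivially on cohomology) $g^{*}$ fixes the fundamental class $[\vartheta(S)]\in H^{6}(\Alb(S),\mathbb{Z})$. By the Clemens--Griffiths computation recalled in the introduction, $[\vartheta(S)]=\frac{1}{3!}\Theta^{3}$, and since $g$ is an automorphism of the abelian variety, $g^{*}\Theta$ is again a principal polarization with $(g^{*}\Theta)^{3}=g^{*}(\Theta^{3})=\Theta^{3}$. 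It then remains to deduce $g^{*}\Theta=\Theta$ from $(g^{*}\Theta)^{3}=\Theta^{3}$. Writing $H$ and $H'$ for the positive definite Hermitian forms on $H^{0}(\Omega_{S})^{*}$ attached to $\Theta$ and $g^{*}\Theta$, I would pick a basis orthonormal for $H$ in which $H'=\mathrm{diag}(\lambda_{1},\dots,\lambda_{5})$ with all $\lambda_{j}>0$; comparing the translation-invariant (hence harmonic) representatives of the two equal classes in $H^{6}$ forces $\prod_{j\in I}\lambda_{j}=1$ for every three-element subset $I\subset\{1,\dots,5\}$, so all $\lambda_{j}$ coincide and cube to $1$, whence $\lambda_{j}=1$ and $g^{*}\Theta=\Theta$. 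This positivity-driven ``cube-root'' step is the one genuinely non-formal point of (1).

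For (2) the key input is the tangent bundle theorem, equivalently the Clemens--Griffiths description of the theta divisor: $\Theta$ has a unique singular point $o$, and its projectivized tangent cone inside $\mathbb{P}(T_{o}\Alb(S))=\mathbb{P}(H^{0}(\Omega_{S})^{*})=\mathbb{P}^{4}$ is exactly $F$. Given an automorphism of $(\Alb(S),\Theta)$ with homomorphism part $h$ and analytic representation $M$, I would take $\Theta$ symmetric so that $h$ preserves it as a divisor; then $h$ fixes its unique singular point $o$, and the differential of $h$ at $o$, being translation invariant and so equal to $M$, carries the tangent cone at $o$ to itself. Consequently the induced projective transformation $q(M)$ of $\mathbb{P}^{4}$ maps $F$ to $F$, i.e. $q(M)\in\Aut(F)$. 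I expect the main obstacle of the whole theorem to sit here, since this step rests on the nontrivial geometry of the singularity of $\Theta$, which I would quote from \cite{Clemens} rather than reprove.

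With (1) and (2) established, parts (3) and (4) follow formally. Under the identification $H^{0}(\Omega_{S})\cong H^{0}(\mathcal{O}_{\mathbb{P}^{4}}(1))$ of the tangent bundle theorem, $\rho(f)$ acts on $H^{0}(\Omega_{S})$ by $f^{*}$, so $q(M_{\rho(f)})=f$; conversely (1)--(2) show $\sigma\mapsto q(M_{\sigma})$ lands in $\Aut(F)$, and since an automorphism of $S$ is determined by $\sigma'$ (the Albanese map being an embedding) and hence by its analytic representation $M_{\sigma}$, the two constructions are mutually inverse, so $\rho$ is an isomorphism. For (4), part (1) makes $\sigma\mapsto$ (homomorphism part of $\sigma'$) an injective homomorphism $\Aut(S)\hookrightarrow\Aut(\Alb(S),\Theta)$, exhibiting $\Aut(S)$ as a subgroup; and by (2)--(3) the map $M\mapsto q(M)$ is a surjection $\Aut(\Alb(S),\Theta)\to\Aut(F)$. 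Its kernel consists of the scalar $M=\zeta\,\mathrm{Id}$ preserving the lattice and the polarization, and when $S$ is generic $\End(\Alb(S))=\mathbb{Z}$ forces $\zeta=\pm1$, yielding $\Aut(\Alb(S),\Theta)/\langle[-1]\rangle\simeq\Aut(F)\simeq\Aut(S)$.
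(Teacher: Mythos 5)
Your overall architecture is sound and, for the parts this paper actually proves in situ, coincides with it: your part 2) is exactly the paper's argument (Beauville's description of the unique singular point $0$ of $\Theta$ and of its projectivized tangent cone in $\mathbb{P}(H^{0}(\Omega_{S})^{*})$ as $F$ itself; the paper merely phrases this through the blow-ups $B_{0}\Theta\subset B_{0}\Alb(S)$), and the structure of 4) is the same, except that to pin down the scalar $\zeta$ you use Kronecker's theorem together with $\End(\Alb(S))=\mathbb{Z}$ for generic $S$, where the paper invokes Birkenhake--Lange, Corollary 13.3.5 (the order of $\zeta$ lies in $\{1,2,3,4,6\}$, and orders $3,4,6$ force $\Alb(S)\simeq E^{5}$); both routes are fine. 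For parts 1) and 3) the paper simply cites \cite{Roulleau}; your direct proof of 1) via $[\vartheta(S)]=\frac{1}{3!}\Theta^{3}$, simultaneous diagonalization of the two positive definite Hermitian forms, and the relations $\prod_{j\in I}\lambda_{j}=1$ for every $3$-element subset $I$, is correct and is precisely the mechanism the introduction attributes to the cited result, so this is a welcome self-contained addition.

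The one genuine soft spot is in 3). You write that an automorphism of $S$ is determined by $\sigma'$ ``and hence by its analytic representation $M_{\sigma}$'': this does not follow formally, since $M_{\sigma}$ only determines the homomorphism part of $\sigma'$ and not its translation part; one must rule out two distinct automorphisms of $S$ whose induced maps on $\Alb(S)$ differ by a translation. (This is true, e.g.\ by the holomorphic Lefschetz fixed point formula: the quotient of two such automorphisms acts trivially on $H^{\bullet}(\mathcal{O}_{S})$, hence has a fixed point because $\chi(\mathcal{O}_{S})=6\neq0$, and a translation with a fixed point is trivial.) More seriously, the claim that $\rho$ and $\sigma\mapsto q(M_{\sigma})$ are ``mutually inverse'' requires the injectivity of $\sigma\mapsto q(M_{\sigma})$, i.e.\ that no $\sigma\neq\mathrm{id}$ has scalar $M_{\sigma}$; your kernel analysis in 4) handles this only for generic $S$, whereas 3) is asserted for every smooth cubic threefold. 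The fix is to use the equivariance of the tangent-cone description you already invoke for the identity $q(M_{\rho(f)})=f$: the identification of $S$ with the Fano scheme of the tangent cone of $\Theta$ at $0$ intertwines $\sigma$ with the action of $q(M_{\sigma})$, so $q(M_{\sigma})=\mathrm{id}$ forces $\sigma=\mathrm{id}$. With that spelled out, your proof is complete.
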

When $C$ is an non-hyperelliptic curve and $(J(C),\Theta)$ is its
jacobian, there is an isomorphism $\Aut(C)\simeq\Aut(J(C),\Theta)/\left\langle [-1]\right\rangle $
(\cite{Birkenhake} Chap.11, exercise 19). Result 4) of Theorem \ref{isomorphisme entre aut S et aut F}
is thus the analogue for a cubic and its intermediate Jacobian.
\begin{proof}
Part 1) and 3) are proved in \cite{Roulleau}, they imply that the
morphism:\[
\begin{array}{ccc}
\Aut(S) & \rightarrow & \Aut(\Alb(S),\Theta)\\
\sigma & \mapsto & \sigma''\end{array}\]
is injective, where $\sigma''$ is the homomorphism part of $\sigma'$.\\
 Let us denote by $B_{x}X$ the blow-up at the point $x$ of a
variety $X$. By \cite{Beauville}, the point $0$ of $\Alb(S)$ is
the unique singularity of the divisor $\Theta$ : it is thus preserved
by any automorphism $\tau$ of the p.p.a.v. $(\Alb(S),\Theta)$. \\
The automorphism $\tau$ induces an automorphism of $B_{0}\Theta$
and $B_{0}\Alb(S)$. Let $M$ denotes the analytic representation
of $\tau$. Let $E$ be the exceptional divisor of $B_{0}\Theta$.
The exceptional divisor of $B_{0}\Alb(S)$ is $\mathbb{P}(H^{0}(\Omega_{S})^{*})$
and we consider the following diagram:\[
\begin{array}{ccc}
E & \rightarrow & B_{0}\Theta\\
\downarrow &  & \downarrow\\
\mathbb{P}(H^{0}(\Omega_{S})^{*}) & \rightarrow & B_{0}\Alb(S)\end{array}\]
 where all the maps are embeddings. The action on $E$ of $\tau$
is obtained by restriction of the action of $\tau$ on $\mathbb{P}(H^{0}(\Omega_{S})^{*})$
(that is the space of tangent directions to the point $0$ of $\Alb(S)$)
; this last action is given by the automorphism $q(M)$ that is the
projectivization of the differential of the automorphism $\tau$ at
$0$. \\
Now, by \cite{Beauville} théorème p. 190, the exceptional divisor
$E\hookrightarrow\mathbb{P}(H^{0}(\Omega_{S})^{*})$ is the cubic
$F\hookrightarrow\mathbb{P}(H^{0}(\Omega_{S})^{*})$ itself, thus
property 2) holds. \\
Suppose that $q(M)$ is the identity. There exists a root of unity
$\lambda$ such that $M$ is the multiplication by $\lambda$. By
\cite{Birkenhake}, Corollary 13.3.5, the order of $\lambda$ is $1,2,3,4$
or $6$, and if the order $d$ is $3,4$ or $6$, then $\Alb(S)$
is isomorphic to $E^{5}$ where $E$ is the unique elliptic curve
with an automorphism of order $d$. \\
On the other hand, the divisor $\Theta$ is symmetric : $[-1]^{*}\Theta=\Theta$
(see \cite{Clemens}), hence if the cubic threefold is generic, $q(M)$
is the identity if and only if $\tau$ is equal to $[1]$ or $[-1]$
and: $\Aut(F)\simeq\Aut(S)\simeq\Aut(\Alb(S),\Theta)/\left\langle [-1]\right\rangle $.\end{proof}
\begin{lem}
\label{odre du groupe d'autom}The order of the group $\Aut(S)$ divides
$11\cdot7\cdot5^{2}3^{6}2^{23}$.\end{lem}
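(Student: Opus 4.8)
The plan is to realize $\Aut(S)$ as a finite subgroup of an integral symplectic group and then apply a Minkowski--Serre type bound. By part 4) of Theorem \ref{isomorphisme entre aut S et aut F}, $\Aut(S)$ is a subgroup of $\Aut(\Alb(S),\Theta)$, the group of automorphisms of the principally polarized abelian variety $(\Alb(S),\Theta)$ fixing the origin. Every such automorphism is determined by, and acts faithfully on, the lattice $H_{1}(S,\mathbb{Z})\cong\mathbb{Z}^{10}$, and it preserves the unimodular alternating form $E$ on $H_{1}(S,\mathbb{Z})$ attached to the principal polarization $\Theta$. Thus I would identify $\Aut(S)$ with a finite subgroup $G$ of $\mathrm{Sp}(H_{1}(S,\mathbb{Z}),E)\cong\mathrm{Sp}_{10}(\mathbb{Z})$.

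Next I would reduce modulo primes. For every odd prime $\ell$, a finite-order automorphism of an abelian variety acting trivially on the $\ell$-torsion is the identity (Serre's lemma); equivalently, the reduction $\mathrm{Sp}_{10}(\mathbb{Z})\to\mathrm{Sp}_{10}(\mathbb{F}_{\ell})$ is injective on the finite subgroup $G$. Hence $|G|$ divides $|\mathrm{Sp}_{10}(\mathbb{F}_{\ell})|=\ell^{25}\prod_{i=1}^{5}(\ell^{2i}-1)$ for every odd prime $\ell$, and consequently $|\Aut(S)|=|G|$ divides $d:=\gcd_{\ell\text{ odd prime}}|\mathrm{Sp}_{10}(\mathbb{F}_{\ell})|$.

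It then remains to compute $d$ prime by prime. For a prime $p$ and an auxiliary odd prime $\ell\neq p$, the $p$-adic valuation of $|\mathrm{Sp}_{10}(\mathbb{F}_{\ell})|$ is $\sum_{i=1}^{5}v_{p}(\ell^{2i}-1)$, which I would evaluate by the lifting-the-exponent lemma in terms of $v_{p}(\ell^{2}-1)$ and the multiplicative order of $\ell$ modulo $p$. Taking $\ell$ to be a primitive root modulo $p$ shows that no prime $p>11$ divides $d$ (its order exceeds $10$, so $p\nmid\ell^{2i}-1$ for $i\le 5$), while minimizing over $\ell$ gives $v_{11}(d)=v_{7}(d)=1$, $v_{5}(d)=2$ and $v_{3}(d)=6$. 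For $p=2$ the prime $\ell=2$ is unavailable, so I would bound the $2$-part through an odd auxiliary prime, minimizing $\sum_{i=1}^{5}v_{2}(\ell^{2i}-1)=5\,v_{2}(\ell^{2}-1)+3$ over odd $\ell$; since $v_{2}(\ell^{2}-1)\ge 3$ always, the minimum $18$ is attained for $\ell\equiv 3,5\pmod 8$. Thus $d=2^{18}\cdot 3^{6}\cdot 5^{2}\cdot 7\cdot 11$, which divides $2^{23}\cdot 3^{6}\cdot 5^{2}\cdot 7\cdot 11$, and the lemma follows.

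The genuinely delicate point is the prime $2$: Serre's injectivity requires $\ell\ge 3$, so the $2$-part cannot be read off from a reduction modulo $2$ and must be controlled through an odd auxiliary prime together with the $p=2$ form of the lifting-the-exponent lemma. I note that the stated exponent $2^{23}$ is exactly the $2$-part of $|\mathrm{Sp}_{10}(\mathbb{F}_{7})|$, i.e. the value obtained by reducing only modulo $7$ (indeed $\sum_{i=1}^{5}v_{2}(7^{2i}-1)=23$); the sharper value $2^{18}$ comes from also using $\ell=3$, and since $2^{18}$ divides $2^{23}$ this suffices for the statement. The remaining steps---the primitive-root argument excluding primes larger than $11$, and the elementary valuation computations at $3,5,7,11$---are routine.
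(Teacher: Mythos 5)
Your proof is correct and follows essentially the same route as the paper: both realize $\Aut(S)$ inside $\Aut(\Alb(S),\Theta)$ via Theorem \ref{isomorphisme entre aut S et aut F} and invoke the Minkowski--Serre faithfulness of the action on torsion to bound the order by a gcd of orders of matrix groups over finite rings (the paper uses $\#GL_{10}(\mathbb{Z}/n\mathbb{Z})$ for $n=3,5,7,11$). Your refinement of keeping track of the symplectic form, so as to reduce into $\mathrm{Sp}_{10}(\mathbb{F}_{\ell})$ rather than $GL_{10}$, even yields the sharper bound $2^{18}\cdot3^{6}\cdot5^{2}\cdot7\cdot11$, which of course divides the stated one.
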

\begin{proof}
The automorphism group of a p.p.a.v. acts faithfully on the group
of $n$-torsion points for $n\geq3$ (\cite{Birkenhake}, Corollary
5.1.10). Thus the order of the group of automorphisms of a $5$ dimensional
p.p.a.v. must divides $a_{n}=\#GL_{10}(\mathbb{Z}/n\mathbb{Z})$ for
$n\geq3$. In particular, it divides \[
11\cdot7\cdot5^{2}3^{6}2^{23}=gcd(a_{3},a_{5},a_{7},a_{11}).\]
Theorem \ref{isomorphisme entre aut S et aut F} implies then that
the order of $\Aut(S)$ divides $11\cdot7\cdot5^{2}3^{6}2^{23}$. 
\end{proof}
Let us prove the following:
\begin{prop}
\label{groupe d'ordre 11}The Klein cubic is the only one smooth cubic
threefold which has an automorphism of order $11$. There is no smooth
cubic threefold with possesses an automorphism of order $7$.
\end{prop}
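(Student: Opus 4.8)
The plan is to reduce the problem about automorphisms of the cubic threefold $F$ to a problem about the analytic representations $M\in GL(H^0(\Omega_S)^*)\cong GL_5(\mathbb{C})$ of automorphisms of the p.p.a.v. $(\Alb(S),\Theta)$. By Theorem \ref{isomorphisme entre aut S et aut F}, an automorphism of $F$ of order $p$ corresponds to $q(M)$, where $M$ is the analytic representation of an automorphism $\tau$ of $(\Alb(S),\Theta)$; so I would first analyze what constraints the existence of such an $M$ of the appropriate order imposes. The key point is that the eigenvalues of $M$ must be roots of unity (since $\tau$ has finite order), and more specifically they are controlled by the arithmetic of the endomorphism algebra of the abelian variety. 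I would set $p\in\{7,11\}$ and suppose $q(M)$ has order $p$ in $PGL_5$.

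First I would pin down the eigenvalues of $M$. Since $\tau$ is an automorphism of finite order of a five-dimensional abelian variety, $M$ is diagonalizable with roots-of-unity eigenvalues, and its rational representation (the action on $H_1(S,\mathbb{Z})\cong\mathbb{Z}^{10}$) has characteristic polynomial with integer coefficients. If $q(M)$ has order $p$, then $M^p=\lambda\cdot\mathrm{Id}$ for some root of unity $\lambda$; rescaling, I may normalize so that $M^p=\mathrm{Id}$, hence the eigenvalues of $M$ are $p$-th roots of unity. The constraint is dimensional: the ten eigenvalues of the rational representation are $p$-th roots of unity, and the nontrivial ones come in full Galois orbits of size $p-1$ (the primitive ones), since the characteristic polynomial is a product of cyclotomic factors $\Phi_1$ and $\Phi_p$ over $\mathbb{Q}$. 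For $p=11$ this forces $\Phi_{11}$ (degree $10$) to appear exactly once, so all ten eigenvalues are the primitive $11$th roots of unity and $M$ acts on $\mathbb{C}^5$ with five distinct primitive $11$th roots of unity as eigenvalues. For $p=7$ the primitive factor $\Phi_7$ has degree $6$, leaving four eigenvalues equal to $1$; I would show this is incompatible with $M$ giving an automorphism of the \emph{smooth} cubic $F=E\hookrightarrow\mathbb{P}^4$.

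Having the eigenvalue structure, I would next use that $q(M)$ is an automorphism of the cubic equation: writing $F$ in diagonalized coordinates $x_1,\dots,x_5$ on which $M$ acts by distinct characters $\xi^{a_j}$, the defining cubic must be $M$-semi-invariant, i.e. each monomial appearing must transform by the same scalar. This turns the geometry into a purely combinatorial condition on the exponent vector: I need to choose a set of cubic monomials $x_ix_jx_k$ whose $M$-weights all coincide, and which define a \emph{smooth} cubic. The smoothness forces the monomials to be numerous and spread out enough that every variable appears (otherwise the partials vanish simultaneously). I expect this combinatorial search, carried out over $\mathbb{Z}/p\mathbb{Z}$, to be the main obstacle: for $p=7$ I would show no choice of weights $(a_1,\dots,a_5)$ admits a set of equal-weight cubic monomials defining a smooth cubic, ruling order $7$ out entirely; for $p=11$ I would show the weight vector is forced (up to the $\mathrm{Aut}$-normalizations and scaling) to be the one realized by the Klein cubic, so that $F$ must be projectively equivalent to $x_1x_5^2+x_5x_3^2+x_3x_4^2+x_4x_2^2+x_2x_1^2$.

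Finally I would assemble the pieces: the order-$7$ case is eliminated by the smoothness obstruction in the previous step, and in the order-$11$ case the forced weight vector together with the semi-invariance condition shows the cubic is unique up to the action of $PGL_5$, hence projectively equivalent to the Klein cubic, whose order-$11$ automorphism is already known from Adler's description of $\mathrm{Aut}(F)\cong PSL_2(\mathbb{F}_{11})$. The converse (that the Klein cubic does carry an order-$11$ automorphism) is immediate, since the cyclic permutation of weights underlying the monomials $x_1x_5^2,x_5x_3^2,\dots$ visibly preserves the equation. The delicate part throughout is controlling the $p$-th root of unity eigenvalues and verifying smoothness combinatorially rather than case-by-case; I would organize the exponent condition as a linear equation modulo $p$ to keep the casework finite and transparent.
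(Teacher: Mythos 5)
Your proposal is correct in outline, but for the order-$11$ statement it follows a genuinely different route from the paper. For order $7$ the two arguments essentially coincide: you derive the eigenvalue pattern $1,1,\mu,\mu^{a},\mu^{b}$ (three pairwise non-conjugate primitive $7$-th roots) from the integrality of the rational representation on $H_{1}(S,\mathbb{Z})\cong\mathbb{Z}^{10}$, whose characteristic polynomial must be a product of $\Phi_{1}$ and $\Phi_{7}$, where the paper simply cites Theorems 13.1.2, 13.2.8 and Proposition 13.2.5 of Birkenhake--Lange; both arguments then decompose $\SSm^{3}$ of the representation into characters and check that no eigenspace contains a smooth cubic. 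For order $11$, however, the paper does \emph{not} classify invariant cubics directly: it invokes the classification of the four five-dimensional p.p.a.v.'s with an order-$11$ automorphism, discards $X_{1},X_{2}$ because they are Jacobians of curves (and intermediate Jacobians are not, by Clemens--Griffiths), discards $X_{3}$ by the symmetric-cube computation for its single eigenvalue pattern, and concludes by the Torelli theorem that $X_{4}$ must be $J(F)$ for the Klein cubic. Your route replaces this by a direct combinatorial classification: since the characteristic polynomial of the rational representation is forced to be $\Phi_{11}$, the weights $(a_{1},\dots,a_{5})$ form a set of representatives of $(\mathbb{Z}/11\mathbb{Z})^{*}$ modulo $\pm1$, and one checks the finitely many resulting weight vectors (up to scaling and permutation) and admissible common weights $c$ for smooth semi-invariant cubics. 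This is more elementary --- it needs neither the CM classification nor Torelli, since projective uniqueness comes out directly (for the Klein weights $(1,9,3,4,5)$ the weight-$0$ eigenspace of $\SSm^{3}$ is spanned by exactly the five Klein monomials, and any smooth member is diagonally equivalent to the Klein equation) --- at the price of a larger finite verification that you defer with ``I would show''; that verification is the real content and must actually be carried out for your argument to close. Two small points to tighten: the normalization $M^{p}=\mathrm{Id}$ is not a free rescaling, because $M$ must remain the analytic representation of a lattice-preserving automorphism; but $M^{p}=\lambda\,\mathrm{Id}$ forces $\lambda$ to have order $d\in\{1,2,3,4,6\}$ prime to $p$, so replacing $M$ by $M^{d}$ achieves it. And the ``immediate'' order-$11$ automorphism of the Klein cubic is the diagonal scaling $x_{j}\mapsto\xi^{a_{j}}x_{j}$, not the cyclic permutation of the variables, which has order $5$.
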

Let $F\hookrightarrow\mathbb{P}^{4}$ be a smooth cubic threefold
and $S$ be its Fano surface. In the Introduction, we mention that
the cubic threefold $F$ can be recovered knowing only the surface
$S$. This important, but non-trivial, result is called the Tangent
Bundle Theorem and is due to Fano, Clemens-Griffiths and Tyurin (Beauville
also gives another proof in \cite{Beauville}). We give more explanation
about that result ; it will explain how we identify the spaces $H^{0}(\mathbb{P}^{4},\mathcal{O}_{\mathbb{P}^{4}})$
and $H^{0}(\Omega_{S})$ and the main ideas of the proof of Proposition
\ref{groupe d'ordre 11}:\\
Let us consider the natural morphism of vector spaces:\[
Ev:\oplus_{n\in\mathbb{N}}\SSm^{n}H^{0}(\Omega_{S})\rightarrow\oplus_{n\in\mathbb{N}}H^{0}(S,\SSm^{n}\Omega_{S})\]
given by the natural maps on each pieces of the graduation. The Tangent
Bundle Theorem can be formulated as follows:\\
The kernel of $Ev$ is an ideal of the ring $\oplus_{n\in\mathbb{N}}\SSm^{n}H^{0}(\Omega_{S})$
generated by a cubic $F_{eq}\in\SSm^{3}H^{0}(\Omega_{S})$ and the
cubic threefold $\{F_{eq}=0\}\hookrightarrow\mathbb{P}(H^{0}(\Omega_{S})^{*})$
is (isomorphic to) the original cubic $F\hookrightarrow\mathbb{P}^{4}$.\\
By this Theorem, the homogenous coordinates $x_{1},\dots,x_{5}\in H^{0}(\mathbb{P}^{4},\mathcal{O}_{\mathbb{P}^{4}}(1))$
of $\mathbb{P}^{4}$ are also a basis of the global holomorphic $1$-forms
of $S$.
\begin{proof}
(of Proposition \ref{groupe d'ordre 11}). The intermediate Jacobian
of a smooth cubic threefold with an order $11$ automorphism is a
$5$ dimensional p.p.a.v. that possesses an automorphism of order
11. By the theory of complex multiplication there are only four such
principally abelian varieties, they are denoted by $X_{1},\dots,X_{4}$
in \cite{Gonzalez}. 

By \cite{Clemens}, an intermediate Jacobian is not a Jacobian of
a curve, but by Theorem $2$ of \cite{Gonzalez}, the p.p.a.v. $X_{1}$
and $X_{2}$ are Jacobians of curves, thus we can eliminate $X_{1}$
and $X_{2}$.

The abelian variety $X_{3}$ has an automorphism $\tau'$ of order
$11$ such that the eigenvalues of its analytic representation $M$
are $\{\xi,\xi^{2},\xi^{3},\xi^{5},\xi^{7}\}$, $\xi=e^{\frac{2i\pi}{11}}$.\\
Suppose that $X_{3}$ is the intermediate Jacobian of a cubic threefold
$F\hookrightarrow\mathbb{P}^{4}$. By Theorem \ref{isomorphisme entre aut S et aut F},
the morphism $^{t}M$ acts on $H^{0}(\mathbb{P}^{4},\mathcal{O}_{\mathbb{P}^{4}}(1))$
and the projectivized morphism $q(M)$ is an automorphism of $F\hookrightarrow\mathbb{P}^{4}$.
\\
Let $\SSm^{3}(^{t}M)$ be the action of $^{t}M$ on \[
H^{0}(\mathbb{P}^{4},\mathcal{O}_{\mathbb{P}^{4}}(3))=\SSm^{3}H^{0}(\mathbb{P}^{4},\mathcal{O}_{\mathbb{P}^{4}}(1)).\]
An equation of $F$ is an eigenvector for $\SSm^{3}(^{t}M)$. We can
easily compute the eigenspaces of $\SSm^{3}(^{t}M)$ and check that
no one contains the equation of a smooth cubic threefold, thus $X_{3}$
cannot be an intermediate Jacobian.

The p.p.a.v. $X_{1},X_{2}$ and $X_{3}$ are not intermediate Jacobians
and by the Torelli Theorem \cite{Clemens}, the association $F\rightarrow(J(F),\Theta)$
is injective, hence the p.p.a.v. $X_{4}$ is the intermediate Jacobian
of the Klein cubic and this cubic is (up to isomorphism) the only
one smooth cubic which has an order $11$ automorphism.

Let us prove that there are no smooth cubic threefolds with an automorphism
of order $7$.\\
By the Theorems 13.1.2. and 13.2.8. and Proposition 13.2.5. of
\cite{Birkenhake}, an automorphism of order $7$ of a five dimensional
Abelian variety has eigenvalues $1,1,\mu,\mu^{a},\mu^{b}$ where $\mu$
is a primitive $7$-th root of unity and $a,b$ are integers such
that $\{\mu,\mu^{a},\mu^{b}\}$ is a set containing three pairwise
non-complex conjugate primitive roots of unity. \\
Up to the change of $\mu$ by a power, there are two possibilities:\\
The first case is $a=2$ and $b=3$. For $c\in\mathbb{Z}/7\mathbb{Z}$,
let us denote by $\chi_{c}$ the representation $\mathbb{C}\rightarrow\mathbb{C};\, t\mapsto\mu^{c}t$
of $\mathbb{Z}/7\mathbb{Z}$. The third symmetric product of the representation:\[
\begin{array}{ccc}
H^{0}(\mathbb{P}^{4},\mathcal{O}_{\mathbb{P}^{4}}(1)) & \rightarrow & H^{0}(\mathbb{P}^{4},\mathcal{O}_{\mathbb{P}^{4}}(1))\\
x_{1},x_{2},x_{3},x_{4},x_{5} & \mapsto & \mu x_{1},\mu^{2}x_{2},\mu^{3}x_{3},x_{4},x_{5}\end{array}\]
decomposes as the direct sum:\[
6\chi_{0}+4\chi_{1}+6\chi_{2}+6\chi_{3}+5\chi_{4}+4\chi_{5}+4\chi_{6}.\]
By example, the space corresponding to $6\chi_{0}$ is generated by
the forms $x_{4}^{3},x_{4}^{2}x_{5},x_{4}x_{5}^{2},x_{5}^{3},x_{2}^{2}x_{3},x_{3}^{2}x_{1}$.
No element of this space is an equation of a smooth cubic threefold.
In the similar way, we can check that the other factors do not give
a smooth cubic.\\
The second case is $a=2$ and $b=4$. In the same manner, we can
check that we do not obtain a smooth cubic in that case.
\end{proof}

\section{The period lattice of the intermediate Jacobian of the Klein cubic.}

Let $F$ be the Klein cubic:\[
x_{1}x_{5}^{2}+x_{5}x_{3}^{2}+x_{3}x_{4}^{2}+x_{4}x_{2}^{2}+x_{2}x_{1}^{2}=0\]
and let $S$ be its Fano surface. Let $\vartheta:S\rightarrow\Alb(S)$
be a fixed Albanese morphism; it is an embedding. Let us compute the
period lattice $H_{1}(S,\mathbb{Z})\subset H^{0}(\Omega_{S})^{*}$
of the Albanese variety of $S$.

The order $5$ automorphism:\[
g:\,(z_{1}:z_{2}:z_{3}:z_{4}:z_{5})\mapsto(z_{5}:z_{1}:z_{4}:z_{2}:z_{3})\]
acts on $F$. Let $\sigma=\rho(g)$ be the automorphism of $S$ defined
in paragraph \ref{parag rappels notations auto}. By Theorem \ref{isomorphisme entre aut S et aut F},
there exists a $5$-th root of unity $\theta$ such that $M_{\sigma}\in GL(H^{0}(\Omega_{S})^{*})$
is equal to: \[
M_{\sigma}:(z_{1},z_{2},z_{3},z_{4},z_{5})\mapsto\theta(z_{5},z_{1},z_{4},z_{2},z_{3})\]
in the basis $e_{1},\dots,e_{5}$ dual to $x_{1},\dots,x_{5}$. Since
the Klein cubic $F$ and $g$ are defined over $\mathbb{Q}$ and the
Fano surface contains points defined over $\mathbb{Q}$, we deduce
that the morphism $M_{\sigma}$ (that is the differential at a point
defined over $\mathbb{Q}$ of an automorphism defined over $\mathbb{Q}$)
is defined over $\mathbb{Q}$, thus: $\theta=1$. \\
Let be $\xi=e^{\frac{2i\pi}{11}}$. The equation of the Klein cubic
is chosen in such a way that it is easy to check that the automorphism
\[
f:\,(z_{1}:z_{2}:z_{3}:z_{4}:z_{5})\mapsto(\xi z_{1}:\xi^{9}z_{2}:\xi^{3}z_{3}:\xi^{4}z_{4}:\xi^{5}z_{5})\]
acts on it. Let be $\tau=\rho(f)$. By Theorem \ref{isomorphisme entre aut S et aut F},
the analytic representation of the homomorphism part of $\tau'$ is:
\[
M_{\tau}:(z_{1},z_{2},z_{3},z_{4},z_{5})\mapsto\xi^{j}(\xi z_{1},\xi^{9}z_{2},\xi^{3}z_{3},\xi^{4}z_{4},\xi^{5}z_{5}),\]
where $j\in\mathbb{Z}/11\mathbb{Z}$. Proposition 13.2.5. of  \cite{Birkenhake}
implies that $j=0$.
\begin{notation}
\label{not:For}For $k\in\mathbb{Z}/11\mathbb{Z}$, let $v_{k}$ be
the vector: \begin{eqnarray*}
v_{k} & = & \xi^{k}e_{1}+\xi^{9k}e_{2}+\xi^{3k}e_{3}+\xi^{4k}e_{4}+\xi^{5k}e_{5}\\
 & = & (M_{\tau})^{k}v_{0}\in H^{0}(\Omega_{S})^{*}\end{eqnarray*}
 and let be $\ell_{k}=\xi^{k}x_{1}+\xi^{9k}x_{2}+\xi^{3k}x_{3}+\xi^{4k}x_{4}+\xi^{5k}x_{5}\in H^{0}(\Omega_{S})$.
\end{notation}
Let us construct a sub-lattice of $H_{1}(S,\mathbb{Z})$.\\
Let $q_{1}$ be the homomorphism part of \[
\sum_{k=0}^{k=4}(\sigma')^{k}\]
(where $\sigma'\circ\vartheta=\vartheta\circ\sigma$). Its analytic
representation is:\[
\begin{array}{ccc}
dq_{1}:H^{0}(\Omega_{S})^{*} & \rightarrow & H^{0}(\Omega_{S})^{*}\\
z & \mapsto & \ell_{0}(z)v_{0}.\end{array}\]
 and its image is an elliptic curve which we denote by $\mathbb{E}$.
The restriction of the homomorphism part of $q_{1}\circ\tau':\Alb(S)\rightarrow\mathbb{E}$
to $\mathbb{E}$ is the multiplication by: \[
\nu=\xi+\xi^{9}+\xi^{3}+\xi^{4}+\xi^{5}=\frac{-1+i\sqrt{11}}{2},\]
thus the curve $\mathbb{E}$ has complex multiplication by the principal
ideal domain $\mathbb{Z}[\nu]$ and there is a constant $c\in\mathbb{C}^{*}$
such that :\[
H_{1}(S,\mathbb{Z})\cap\mathbb{C}v_{0}=\mathbb{Z}[\nu]cv_{0}.\]

\begin{rem}
\label{rem:Up-to-a}Up to a normalization of the basis $e_{1},\dots,e_{5}$
by a multiplication by the scalar $\frac{1}{c}$, we suppose that
$c=1$. Under this normalization, the Klein cubic remains the same
: \[
F=\{x_{1}x_{5}^{2}+x_{5}x_{3}^{2}+x_{3}x_{4}^{2}+x_{4}x_{2}^{2}+x_{2}x_{1}^{2}=0\}.\]

\end{rem}
Let $\Lambda_{0}\subset H^{0}(\Omega_{S})^{*}$ be the $\mathbb{Z}$-module
generated by the $v_{k},\, k\in\mathbb{Z}/11\mathbb{Z}$. The group
$\Lambda_{0}$ is stable under the action of $M_{\tau}$ and $\Lambda_{0}\subset H_{1}(S,\mathbb{Z})$.
\begin{lem}
\label{A est isomorphe =0000E0 E5}The $\mathbb{Z}$-module $\Lambda_{0}\subset H_{1}(S,\mathbb{Z})$
is equal to the lattice: \[
R_{0}=\mathbb{Z}[\nu]v_{0}+\mathbb{Z}[\nu]v_{1}+\mathbb{Z}[\nu]v_{2}+\mathbb{Z}[\nu]v_{3}+\mathbb{Z}[\nu]v_{4}.\]
\end{lem}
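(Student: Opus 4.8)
The plan is to prove the two inclusions $R_0\subseteq\Lambda_0$ and $\Lambda_0\subseteq R_0$ separately. Before doing so I would record that $R_0$ is a genuine rank-$10$ lattice spanning $H^0(\Omega_S)^*$: with $(a_1,\dots,a_5)=(1,9,3,4,5)$ the five vectors $v_0,\dots,v_4$ have coordinate matrix $(\xi^{a_i k})_{0\le k\le 4}$, a Vandermonde matrix in the distinct nodes $\xi,\xi^9,\xi^3,\xi^4,\xi^5$, hence they are $\mathbb{C}$-linearly independent and $R_0=\bigoplus_{k=0}^4\mathbb{Z}[\nu]v_k$ has real rank $10=\dim_{\mathbb R}H^0(\Omega_S)^*$. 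Throughout I would use that $\Lambda_0$ is stable under $M_\tau$ (as already noted) and that $v_{k+1}=M_\tau v_k$ with $M_\tau$ of order $11$.

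The inclusion $R_0\subseteq\Lambda_0$ rests on one clean observation. The exponents $\{1,9,3,4,5\}$ are precisely the nonzero quadratic residues modulo $11$, and this set $Q$ is a subgroup of $(\mathbb{Z}/11\mathbb{Z})^\times$. Since $M_\tau$ acts on $e_i$ by the scalar $\xi^{a_i}$ with $a_i\in Q$, the operator $\sum_{a\in Q}M_\tau^{a}$ acts on $e_i$ by $\sum_{a\in Q}\xi^{a_i a}=\sum_{b\in Q}\xi^{b}=\nu$, because $a\mapsto a_i a$ permutes $Q$. Hence $\sum_{a\in Q}M_\tau^{a}=\nu\cdot\mathrm{Id}$ as endomorphisms of $H^0(\Omega_S)^*$, equivalently $\nu v_k=\sum_{a\in Q}v_{a+k}$ for every $k$. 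As the right-hand side is an integral combination of the $v_j$, multiplication by $\nu$ preserves $\Lambda_0$; since $\Lambda_0$ already contains $v_0,\dots,v_4$, it contains $\mathbb{Z}[\nu]v_k$ for $k=0,\dots,4$, so $R_0\subseteq\Lambda_0$.

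For the reverse inclusion $\Lambda_0\subseteq R_0$ I must show $v_5,\dots,v_{10}\in R_0$; since $v_0,\dots,v_4\in R_0$ trivially, this is where the work lies. Here I would use the minimal polynomial $g(t)=\prod_{a\in Q}(t-\xi^{a})$ of the diagonalizable operator $M_\tau$. Because the root set $\{\xi^a:a\in Q\}$ is stable under $\mathrm{Gal}(\mathbb{Q}(\xi)/\mathbb{Q}(\sqrt{-11}))$ (this group acts on exponents by multiplication by squares, i.e. by $Q$), the polynomial $g$ has coefficients in $\mathbb{Q}(\sqrt{-11})$; being products of algebraic integers they lie in the ring of integers $\mathbb{Z}[\nu]=\mathcal{O}_{\mathbb{Q}(\sqrt{-11})}$. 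Thus $g(M_\tau)=0$ expresses $M_\tau^5$ as a $\mathbb{Z}[\nu]$-linear combination of $I,M_\tau,\dots,M_\tau^4$; applying this to $v_0$ gives $v_5\in\sum_{k=0}^4\mathbb{Z}[\nu]v_k=R_0$. Consequently $M_\tau R_0\subseteq R_0$ (the only new generator to check is $M_\tau v_4=v_5$), so $R_0$ is $M_\tau$-stable and contains $v_0$; therefore it contains $v_k=M_\tau^k v_0$ for all $k$, giving $\Lambda_0\subseteq R_0$ and hence $\Lambda_0=R_0$.

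I expect the main obstacle to be exactly this second inclusion: whereas $R_0\subseteq\Lambda_0$ follows from the single Gauss-sum identity $\sum_{a\in Q}M_\tau^a=\nu\,\mathrm{Id}$, proving $\Lambda_0\subseteq R_0$ requires controlling the whole $M_\tau$-orbit of $v_0$, and the decisive input is that the half-cyclotomic factor $g(t)=\prod_{a\in Q}(t-\xi^a)$ of $\Phi_{11}(t)$ has coefficients in $\mathbb{Z}[\nu]$ rather than merely in $\mathbb{Q}(\xi)$. Alternatively one could avoid $g$ by solving the linear system formed by the eleven relations $\nu v_k=\sum_{a\in Q}v_{a+k}$ together with $\sum_{k=0}^{10}v_k=0$ to express each of $v_5,\dots,v_{10}$ over $\mathbb{Z}[\nu]$ in terms of $v_0,\dots,v_4$, but the minimal-polynomial route is cleaner.
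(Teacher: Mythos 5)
Your proof is correct and follows essentially the same route as the paper: your Gauss-sum identity $\sum_{a\in Q}M_{\tau}^{a}=\nu\,\mathrm{Id}$ applied to $v_{0}$ is precisely the paper's relation $\nu v_{0}=v_{1}+v_{3}+v_{4}+v_{5}+v_{9}$, and the relation $g(M_{\tau})v_{0}=0$ coming from the half-cyclotomic factor is precisely the paper's identity $v_{5}=v_{0}+(1+\nu)v_{1}-v_{2}+v_{3}+\nu v_{4}$. The only difference is that you derive these two identities conceptually (from the quadratic-residue subgroup $Q\subset(\mathbb{Z}/11\mathbb{Z})^{\times}$ and the factorization of $\Phi_{11}$ over $\mathbb{Z}[\nu]$) where the paper simply states them as explicit computations.
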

\begin{proof}
We have:\[
\nu v_{0}=v_{1}+v_{3}+v_{4}+v_{5}+v_{9},\]
hence $\nu v_{0}$ is an element of $\Lambda_{0}$. This implies that
the vectors $\nu v_{k}=(M_{\tau})^{k}\nu v_{0}$ are elements of $\Lambda_{0}$
for all $k$, hence: $R_{0}\subset\Lambda_{0}$. Conversely, we have:
\[
v_{5}=v_{0}+(1+\nu)v_{1}-v_{2}+v_{3}+\nu v_{4}.\]
This proves that the lattice $R_{0}$ contains the vectors $v_{k}=(M_{\tau})^{k}v_{0}$
generating $\Lambda_{0}$, thus: $R_{0}=\Lambda_{0}$. 
\end{proof}
Let us compute the first Chern class $c_{1}(\Theta)$ of the Theta
divisor $\Theta$ of $\Alb(S)$:
\begin{lem}
\label{forme hermitienne de la cub de Klein}Let $H$ be the matrix
of the Hermitian form of the polarization $\Theta$ in the basis $e_{1},\dots,e_{5}$.
There exists a positive integer $a$ such that: \[
H=a\frac{2}{\sqrt{11}}I_{5}\]
 where $I_{5}$ is the size $5$ identity matrix.\end{lem}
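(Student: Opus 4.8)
The plan is to use that both automorphisms $\tau$ and $\sigma$ constructed above are automorphisms of the p.p.a.v. $(\Alb(S),\Theta)$, so their analytic representations $M_\tau$ and $M_\sigma$ preserve the Hermitian form $\mathcal H$ attached to $\Theta$; together with the integrality of $\mathcal H$ on the sublattice $R_0\subset H_1(S,\mathbb{Z})$ this pins down $H$ completely.

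First I would record the preservation relation. By Theorem \ref{isomorphisme entre aut S et aut F} the homomorphism parts of $\tau'$ and $\sigma'$ are automorphisms of $(\Alb(S),\Theta)$, and an automorphism of a polarized abelian variety with analytic representation $M$ satisfies $M^{*}HM=H$, where $M^{*}={}^{t}\bar M$ and $H$ is the (positive definite) matrix of $\mathcal H$ in the basis $e_1,\dots,e_5$. Applying this to the diagonal, unitary matrix $M_\tau=\mathrm{diag}(\xi,\xi^{9},\xi^{3},\xi^{4},\xi^{5})$ and writing $H=(h_{ij})$, the relation $M_\tau^{*}HM_\tau=H$ reads $\xi^{c_j-c_i}h_{ij}=h_{ij}$ with $(c_1,\dots,c_5)=(1,9,3,4,5)$. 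Since these exponents are pairwise distinct modulo $11$, every off-diagonal entry vanishes and $H=\mathrm{diag}(h_1,\dots,h_5)$ with $h_i>0$ real. Next I would feed in $M_\sigma$. Reading its action on the dual basis, $M_\sigma$ is the permutation matrix of the $5$-cycle $e_1\mapsto e_2\mapsto e_4\mapsto e_3\mapsto e_5\mapsto e_1$; for a permutation matrix $P$ with $Pe_i=e_{\pi(i)}$ and diagonal $H$, the conjugate $P^{*}HP$ is diagonal with entries $h_{\pi(i)}$, so $M_\sigma^{*}HM_\sigma=H$ forces $h_{\pi(i)}=h_i$ for all $i$. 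As $\pi$ is a single $5$-cycle, all the $h_i$ coincide, and hence $H=hI_5$ for some $h>0$.

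It then remains to show $h=2a/\sqrt{11}$ for a positive integer $a$, and here I would use integrality. The imaginary part $E=\mathrm{Im}\,\mathcal H$ is $\mathbb{Z}$-valued on $H_1(S,\mathbb{Z})$, hence on the sublattice $R_0=\bigoplus_{k=0}^{4}\mathbb{Z}[\nu]v_k$ of Lemma \ref{A est isomorphe =0000E0 E5}. With $H=hI_5$ one has $\mathcal H(v_j,v_k)=h\langle v_j,v_k\rangle$ for the standard Hermitian product, and by Notation \ref{not:For}, $\langle v_j,v_k\rangle=\sum_{c\in\{1,9,3,4,5\}}\xi^{c(j-k)}$. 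The exponents $\{1,3,4,5,9\}$ are exactly the nonzero squares modulo $11$, so this Gauss-period sum equals $5$ when $j\equiv k$, equals $\nu$ when $j-k$ is a nonzero square, and equals $\bar\nu=-1-\nu$ otherwise. In particular, choosing $j-k$ a nonzero square, $E(v_j,v_k)=\mathrm{Im}(h\nu)=h\sqrt{11}/2$, which must be an integer $a$; positivity of $h$ gives $a>0$, and $h=2a/\sqrt{11}$ yields $H=a\frac{2}{\sqrt{11}}I_5$.

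The only genuine content lies in the last step: everything reduces to recognizing $\{1,3,4,5,9\}$ as the quadratic residues modulo $11$ and evaluating the resulting periods as $\nu$ and $\bar\nu$, which is precisely the arithmetic of $\mathbb{Q}(\sqrt{-11})$ already built into the definition of $\nu$. I would also take care to fix the conjugate-linear convention for $\mathcal H$ and to check that $M_\tau$ and $M_\sigma$ are unitary, so that $M^{*}HM=H$ is indeed the correct preservation relation; these verifications are routine. I note finally that the argument uses only $R_0\subset H_1(S,\mathbb{Z})$ and not the full period lattice, so it is logically independent of the harder lattice computation in Theorem \ref{le r=0000E9seau de A de klein}.
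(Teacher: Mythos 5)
Your proposal is correct and follows essentially the same route as the paper: both use preservation of $H$ under the diagonal matrix $M_{\tau}$ to force $H$ diagonal, then under the $5$-cycle $M_{\sigma}$ to equalize the diagonal entries, and finally integrality of $\Im m\, H$ on the sublattice generated by the $v_{k}$ (the paper evaluates $\Im m({}^{t}v_{2}H\bar{v}_{1})$, which is exactly your Gauss-period computation $\Im m(h\nu)=h\sqrt{11}/2$). Your write-up merely makes explicit the quadratic-residue identification that the paper leaves implicit.
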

\begin{proof}
By Theorem \ref{isomorphisme entre aut S et aut F}, the homomorphism
part of $\tau'$ preserves the polarization $\Theta$. This implies
that: \[
^{t}M_{\tau}H\bar{M}_{\tau}=H\]
where $\bar{M}_{\tau}$ is the matrix in the basis $e_{1},..,e_{5}$
whose coefficients are conjugated of those of $M_{\tau}$. The only
Hermitian matrices that verify this equality are the diagonal matrices.
By the same reasoning with $\sigma$ instead of $\tau$, we obtain
that these diagonal coefficients are equal and: \[
H=a\frac{2}{\sqrt{11}}I_{5}\]
where $a$ is a positive real ($H$ is a positive definite Hermitian
form). As $H$ is a polarization, the alternating form $c_{1}(\Theta)=\Im m(H)$
take integer values on $H_{1}(S,\mathbb{Z})$, hence $a=\Im m(^{t}v_{2}H\bar{v}_{1})$
is an integer.
\end{proof}
Now, we construct a lattice that contains $H_{1}(S,\mathbb{Z})$:\\
Let be $k\in\mathbb{Z}/11\mathbb{Z}$. The analytic representation
of the morphism $q_{1}\circ((\tau')^{k})$ is:\[
\begin{array}{ccc}
H^{0}(\Omega_{S})^{*} & \rightarrow & H^{0}(\Omega_{S})^{*}\\
z & \mapsto & \ell_{k}(z)v_{0}.\end{array}\]
Let be $\lambda\in H_{1}(S,\mathbb{Z})$. As \[
H_{1}(S,\mathbb{Z})\cap\mathbb{C}v_{0}=\mathbb{Z}[\nu]v_{0},\]
the scalar $\ell_{k}(\lambda)$ is an element of $\mathbb{Z}[\nu]$.
Let us define: \[
\Lambda_{4}=\{z\in H^{0}(\Omega_{S})^{*}/\ell_{k}(z)\in\mathbb{Z}[\nu],\,0\leq k\leq4\}.\]

\begin{lem}
\label{le r=0000E9seau lambda 4}The $\mathbb{Z}$-module $\Lambda_{4}\supset H_{1}(S,\mathbb{Z})$
is equal to the lattice: \[
R_{1}=\sum_{k=0}^{k=3}\frac{\mathbb{Z}[\nu]}{1+2\nu}(v_{k}-v_{k+1})+\mathbb{Z}[\nu]v_{0}.\]
Moreover $M_{\tau}$ stabilizes $\Lambda_{4}$.\end{lem}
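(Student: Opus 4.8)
The plan is to present both $\Lambda_4$ and $R_1$ as explicit $\mathbb{Z}[\nu]$-lattices sitting over $\Lambda_0=R_0$ (Lemma~\ref{A est isomorphe =0000E0 E5}), to prove one inclusion directly, and to match the two indices over $\Lambda_0$. First I would record the arithmetic of the pairings $\ell_k(v_m)$. Since $e_1,\dots,e_5$ is the eigenbasis of $M_{\tau}$ with eigenvalues $\xi,\xi^9,\xi^3,\xi^4,\xi^5$ and $x_1,\dots,x_5$ is its dual basis, one has $\ell_k(v_m)=s(k+m)$ with $s(n)=\xi^n+\xi^{9n}+\xi^{3n}+\xi^{4n}+\xi^{5n}$. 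As $\{1,3,4,5,9\}$ are the quadratic residues mod $11$, this Gauss period satisfies $s(0)=5$, $s(n)=\nu$ for $n$ a nonzero residue and $s(n)=\bar\nu=-1-\nu$ otherwise. The two number-theoretic facts I would isolate are that $1+2\nu=i\sqrt{11}$ generates the unique prime $\mathfrak p$ of $\mathbb{Z}[\nu]$ above $11$ (of absolute norm $11$), and that $\nu\equiv\bar\nu\equiv 5\pmod{\mathfrak p}$, so that $s(n)\equiv 5\pmod{\mathfrak p}$ for every $n$. I would also note $\nu-5=(1+\nu)(1+2\nu)$.

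For the inclusion $R_1\subseteq\Lambda_4$: since the $\ell_k$ are $\mathbb{C}$-linear, $\Lambda_4$ is closed under multiplication by $\nu$, hence is a $\mathbb{Z}[\nu]$-module, so it is enough to test the generators. Now $\ell_k(v_0)=s(k)\in\mathbb{Z}[\nu]$ gives $v_0\in\Lambda_4$, and $\ell_j(v_k-v_{k+1})=s(j+k)-s(j+k+1)\equiv 0\pmod{\mathfrak p}$ by the uniform congruence, whence $\tfrac{1}{1+2\nu}(v_k-v_{k+1})\in\Lambda_4$ for $0\le k\le 3$. Working in the coordinates $z=\sum_m c_mv_m$ (valid because $v_0,\dots,v_4$ is a $\mathbb{C}$-basis, $\Lambda_0$ being a full lattice), these four vectors are visibly independent of order $11$ modulo $R_0$, so $R_1/R_0\cong(\mathbb{Z}[\nu]/\mathfrak p)^4$ and $[R_1:\Lambda_0]=11^4$.

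To compute $[\Lambda_4:\Lambda_0]$ I would use that in the $c$-coordinates $z\in\Lambda_4$ iff $A\,c\in\mathbb{Z}[\nu]^5$, where $A=(\ell_k(v_m))_{0\le k,m\le 4}=(s(k+m))$; thus $\Lambda_4$ corresponds to $A^{-1}\mathbb{Z}[\nu]^5$ and $[\Lambda_4:\Lambda_0]=|N_{\mathbb{Q}(\nu)/\mathbb{Q}}(\det A)|$. The crux is to show $\det A=\pm(1+2\nu)^4$. Subtracting each row of $A$ from the next makes the lower four rows entrywise divisible by $1+2\nu$ (the off-diagonal differences lie in $\{0,\pm(1+2\nu)\}$, and the single corner entry is $\nu-5=(1+\nu)(1+2\nu)$), which extracts $(1+2\nu)^4$; the residual integral $5\times 5$ determinant then evaluates to a unit. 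Hence $[\Lambda_4:\Lambda_0]=N(\mathfrak p)^4=11^4$. Combining $\Lambda_0=R_0\subseteq R_1\subseteq\Lambda_4$ with $[R_1:\Lambda_0]=[\Lambda_4:\Lambda_0]=11^4$ forces $R_1=\Lambda_4$.

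Finally, for $M_{\tau}$-stability, the eigenbasis gives $\ell_k\circ M_{\tau}=\ell_{k+1}$, so $M_{\tau}z\in\Lambda_4$ is equivalent to $\ell_1(z),\dots,\ell_5(z)\in\mathbb{Z}[\nu]$; the only new requirement is on $\ell_5$, and the relation $v_5=v_0+(1+\nu)v_1-v_2+v_3+\nu v_4$ of Lemma~\ref{A est isomorphe =0000E0 E5} transfers verbatim to $\ell_5=\ell_0+(1+\nu)\ell_1-\ell_2+\ell_3+\nu\ell_4$ (the $v_k$ and $\ell_k$ are built from the same exponents, so their linear relations coincide), giving $\ell_5(z)\in\mathbb{Z}[\nu]$. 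The step I expect to fight hardest is the exact determinant: the congruence $A\equiv 5J\pmod{\mathfrak p}$ with $J$ the all-ones matrix cheaply yields $\mathfrak p^4\mid\det A$, hence the lower bound $11^4\mid[\Lambda_4:\Lambda_0]$, but the matching upper bound requires checking that no further prime divides $\det A$, i.e. that the residual determinant after extracting $(1+2\nu)^4$ is genuinely a unit.
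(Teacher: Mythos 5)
Your argument is correct, but it reaches the conclusion by a genuinely different route than the paper. The paper's proof is a direct computation: it writes $\Lambda_{4}=\bigoplus_{j}\mathbb{Z}[\nu]\ell_{j}^{*}$ in the basis dual to the $\ell_{k}$, displays the explicit change-of-basis matrix $^{t}A^{-1}A$ from $(v_{0},\dots,v_{4})$ to $(\ell_{1}^{*},\dots,\ell_{5}^{*})$, and exhibits a matrix $B\in SL_{5}(\mathbb{Z}[\nu])$ carrying the $\ell_{j}^{*}$ onto the stated generators of $R_{1}$. You instead sandwich both lattices over $\Lambda_{0}$: the uniform congruence $s(n)\equiv 5\pmod{(1+2\nu)}$ for the Gauss periods gives the inclusion $R_{1}\subseteq\Lambda_{4}$ almost for free and, as a bonus, explains conceptually why the denominator is exactly the ramified prime above $11$; you then match the indices $[R_{1}:\Lambda_{0}]=[\Lambda_{4}:\Lambda_{0}]=11^{4}$. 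Your treatment of $M_{\tau}$-stability via $\ell_{k}\circ M_{\tau}=\ell_{k+1}$ and the transferred relation $\ell_{5}=\ell_{0}+(1+\nu)\ell_{1}-\ell_{2}+\ell_{3}+\nu\ell_{4}$ is equivalent to the paper's check on $M_{\tau}\bigl(\tfrac{1}{1+2\nu}(v_{3}-v_{4})\bigr)$ but arguably cleaner. The one step you flag as unfinished, namely that $\det\bigl(s(k+m)\bigr)$ is exactly $\pm(1+2\nu)^{4}$, in fact closes without any residual matrix work: $\bigl(s(k+m)\bigr)=V\,{}^{t}V$ with $V=(\xi^{a_{j}k})$ of Vandermonde type, so the determinant equals $\prod_{j<j'}(\xi^{a_{j'}}-\xi^{a_{j}})^{2}$ up to sign; each factor has norm $\pm 11$ from $\mathbb{Q}(\xi)$ to $\mathbb{Q}$, giving total norm $11^{20}$, and since the determinant lies in $\mathbb{Z}[\nu]$ this forces $N_{\mathbb{Q}(\nu)/\mathbb{Q}}(\det)=11^{4}$, hence $(\det)=\bigl((1+2\nu)\bigr)^{4}$ as ideals and $\det=\pm(1+2\nu)^{4}$ since the only units are $\pm1$. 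With that observation your proof is complete, and it replaces the paper's explicit inversion and unimodular change of basis by a congruence plus a norm computation.
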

\begin{proof}
Let be $\ell_{1}^{*},\ldots,\ell_{5}^{*}\in H^{0}(\Omega_{S})^{*}$
be the dual basis of $\ell_{1},\ldots,\ell_{5}$ (see Notations \ref{not:For}).
Then $\Lambda_{4}=\bigoplus_{j=1}^{5}\mathbb{Z}[\nu]\ell_{j}^{*}$.
Since $(e_{1},\ldots,e_{5})=(\ell_{1}^{*},\ldots,\ell_{5}^{*})A$
and $(v_{0},\ldots,v_{4})=(e_{1},\ldots,e_{5})^{t}A$ for the matrix:
\[
A=\left(\begin{array}{ccccc}
1 & 1 & 1 & 1 & 1\\
\xi & \xi^{9} & \xi^{3} & \xi^{4} & \xi^{5}\\
\xi^{2} & \xi^{7} & \xi^{6} & \xi^{8} & \xi^{10}\\
\xi^{3} & \xi^{5} & \xi^{9} & \xi & \xi^{4}\\
\xi^{4} & \xi^{3} & \xi & \xi^{5} & \xi^{9}\end{array}\right),\]
we have $(\ell_{1}^{*},\ldots,\ell_{5}^{*})=(v_{0},\ldots,v_{4})^{t}A^{-1}A$.
Moreover: \[
^{t}A^{-1}A=\frac{1}{1+2\nu}\left(\begin{array}{ccccc}
-1 & -\nu & 0 & -1 & 1-\nu\\
-\nu & 2 & 0 & -\nu & 3+\nu\\
0 & 0 & 0 & 1 & -1\\
-1 & -\nu & 1 & -2 & 1-\nu\\
1-\nu & 3+\nu & -1 & 1-\nu & 2+2\nu\end{array}\right).\]
Let $B$ be the matrix: \[
B=\left(\begin{array}{ccccc}
-\nu-1 & 1 & -1 & 0 & 5\\
1 & -1 & 0 & 0 & \nu\\
-1 & 0 & 0 & 1 & -1-\nu\\
0 & 0 & 1 & 0 & \nu\\
0 & 1 & 0 & 0 & \nu\end{array}\right)\in SL_{5}(\mathbb{Z}[\nu]).\]
We have $(\ell_{1}^{*},\ldots,\ell_{5}^{*})B=(v_{0},\ldots,v_{4})^{t}A^{-1}AB=(\frac{v_{0}-v_{1}}{1+2\nu},\frac{v_{1}-v_{2}}{1+2\nu},\frac{v_{2}-v_{3}}{1+2\nu},\frac{v_{3}-v_{4}}{1+2\nu},v_{0})$,
thus $\Lambda_{4}=R_{1}$. By using the equality: \[
v_{5}=v_{0}+(1+\nu)v_{1}-v_{2}+v_{3}+\nu v_{4},\]
we easily check that the vector $M_{\tau}(\frac{1}{1+2\nu}(v_{3}-v_{4}))\in H^{0}(\Omega_{S})^{*}$
is in $\Lambda_{4}$, hence $\Lambda_{4}$ is stable by $M_{\tau}$.
\end{proof}
Now, using the action of $M_{\tau}$, we determine the lattice $H_{1}(S,\mathbb{Z})$
among lattices $\Lambda$ such that $\Lambda_{0}\subset\Lambda\subset\Lambda_{4}$.\\
 We denote by $\phi:\Lambda_{4}\rightarrow\Lambda_{4}/\Lambda_{0}$
the quotient map. The ring $\mathbb{Z}[\nu]/(1+2\nu)$ is the finite
field with $11$ elements. The quotient $\Lambda_{4}/\Lambda_{0}$
is a $\mathbb{Z}[\nu]/(1+2\nu)$-vector space with basis:\[
\begin{array}{cc}
t_{1}=\frac{1}{1+2\nu}(v_{0}-v_{1})+\Lambda_{0}, & t_{2}=\frac{1}{1+2\nu}(v_{1}-v_{2})+\Lambda_{0},\\
t_{3}=\frac{1}{1+2\nu}(v_{2}-v_{3})+\Lambda_{0}, & t_{4}=\frac{1}{1+2\nu}(v_{3}-v_{4})+\Lambda_{0}.\end{array}\]

Let $R$ be a lattice such that : $\Lambda_{0}\subset R\subset\Lambda_{4}$.
The group $\phi(R)$ is a sub-vector space of $\Lambda_{4}/\Lambda_{0}$
and: \[
\phi^{-1}\phi(R)=R+\Lambda_{0}=R.\]
The set of lattices $R$ such that $\Lambda_{0}\subset R\subset\Lambda_{4}$
is thus in bijection with the set of sub-vector spaces of $\Lambda_{4}/\Lambda_{0}$
and these lattices are also $\mathbb{Z}[\nu]$-modules. \\
Because $M_{\tau}$ preserves $\Lambda_{0}$, the morphism $M_{\tau}$
induces a morphism $\widehat{M}_{\tau}$ on the quotient $\Lambda_{4}/\Lambda_{0}$
such that $\phi\circ M_{\tau}=\widehat{M}_{\tau}\circ\phi$. As $M_{\tau}$
stabilizes $H_{1}(S,\mathbb{Z})$, the vector space $\phi(H_{1}(S,\mathbb{Z}))$
is stable by $\widehat{M}_{\tau}$. Let be:\begin{eqnarray*}
w_{1} & = & -t_{1}+3t_{2}-3t_{3}+t_{4}\\
w_{2} & = & t_{1}-2t_{2}+t_{3}\\
w_{3} & = & -t_{1}+t_{2}\\
w_{4} & = & t_{1}.\end{eqnarray*}
 The matrix of $\widehat{M}_{\tau}$ is \[
\left(\begin{array}{cccc}
0 & 0 & 0 & -1\\
1 & 0 & 0 & 4\\
0 & 1 & 0 & 5\\
0 & 0 & 1 & 4\end{array}\right)\]
 in the basis $t_{1},\ldots,t_{4}$ of $\Lambda_{4}/\Lambda$ and
the matrix of $\widehat{M}_{\tau}$ is:\[
\left(\begin{array}{cccc}
1 & 1 & 0 & 0\\
0 & 1 & 1 & 0\\
0 & 0 & 1 & 1\\
0 & 0 & 0 & 1\end{array}\right)\]
in the basis $w_{1},.\ldots,w_{4}$. The sub-spaces stable by $\widehat{M}_{\tau}$
are the space $W_{0}=\{0\}$ and the spaces $W_{j},\,1\leq j\leq4$
generated by $w_{1},..,w_{j}$. Let $\Lambda_{j}$ be the lattice
$\phi^{-1}W_{j}$, then:
\begin{thm}
\label{thmThe-attice}The lattice $H_{1}(S,\mathbb{Z})$ is equal
to $\Lambda_{2}$, and $\Lambda_{2}$ is equal to \[
R_{2}=\frac{\mathbb{Z}[\nu]}{1+2\nu}(v_{0}-3v_{1}+3v_{2}-v_{3})+\frac{\mathbb{Z}[\nu]}{1+2\nu}(v_{1}-3v_{2}+3v_{3}-v_{4})+\bigoplus_{k=0}^{2}\mathbb{Z}[\nu]v_{k}.\]
Moreover, the Hermitian matrix associated to $\Theta$ is equal to
$\frac{2}{\sqrt{11}}I_{5}$ in the basis $e_{1},\dots,e_{5}$ and
$c_{1}(\Theta)=\frac{i}{\sqrt{11}}\sum_{k=1}^{5}dx_{k}\wedge d\bar{x}_{k}$.\end{thm}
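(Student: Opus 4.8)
The plan is to pin down the single index $j\in\{0,1,2,3,4\}$ with $H_1(S,\mathbb{Z})=\Lambda_j$, and at the same time the integer $a$ of Lemma \ref{forme hermitienne de la cub de Klein}, by using that $\Theta$ is a \emph{principal} polarization. The discussion preceding the statement already does the group-theoretic half of the work: $\Lambda_0\subset H_1(S,\mathbb{Z})\subset\Lambda_4$, the image $\phi(H_1(S,\mathbb{Z}))$ is $\widehat{M}_\tau$-stable, and the only $\widehat{M}_\tau$-stable subspaces are the members $W_0\subset W_1\subset\cdots\subset W_4$ of a single flag (because $\widehat{M}_\tau$ is one unipotent Jordan block). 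Hence $H_1(S,\mathbb{Z})=\Lambda_j$ for exactly one $j$, and the whole problem is to say which.

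Since $(\Alb(S),\Theta)$ is principally polarized, the alternating form $E:=c_1(\Theta)=\Im m(H)$ is integral and unimodular on $H_1(S,\mathbb{Z})$; equivalently $|\mathrm{Pf}(E)|=1$ when the Pfaffian is computed in a $\mathbb{Z}$-basis. I would first compute this Pfaffian on $\Lambda_0$ and then transport it to each $\Lambda_j$. Writing $H=a\frac{2}{\sqrt{11}}I_5$ as in Lemma \ref{forme hermitienne de la cub de Klein}, $E$ is the scalar $a\frac{2}{\sqrt{11}}$ times the standard symplectic form $\Im m\langle\cdot,\cdot\rangle$ in the coordinates $e_1,\dots,e_5$, so on any lattice $L$ one has $|\mathrm{Pf}_L(E)|=(a\frac{2}{\sqrt{11}})^5\,\mathrm{covol}(L)$. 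For $L=\Lambda_0=\bigoplus_{k=0}^4\mathbb{Z}[\nu]v_k=P(\mathbb{Z}[\nu]^5)$, where $P$ is the $\mathbb{C}$-linear map $\epsilon_k\mapsto v_k$ of matrix ${}^tA$, the covolume factors as $\mathrm{covol}(\Lambda_0)=|\det A|^2(\Im m\,\nu)^5=\det(A\bar A^{\,t})(\frac{\sqrt{11}}{2})^5$. The Hermitian Gram matrix $A\bar A^{\,t}=(\langle v_m,v_n\rangle)$ has diagonal $5$ and off-diagonal entry $\nu$ or $\bar\nu$ according as $m-n$ is a quadratic residue modulo $11$ (the exponents $1,9,3,4,5$ being exactly the residues); its determinant is $|\det A|^2$ with $A$ the Vandermonde matrix in the roots $\xi^{c_s}$. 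The ten pairwise differences realize each distance $1,\dots,5$ exactly twice, so $|\det A|^2=\prod_{k=1}^5(2-2\cos\frac{2\pi k}{11})^2=\prod_{k=1}^{10}|1-\xi^k|^2=|\Phi_{11}(1)|^2=11^2$. After the factors of $\frac{2}{\sqrt{11}}$ and $\frac{\sqrt{11}}{2}$ cancel, this gives the clean value $|\mathrm{Pf}_{\Lambda_0}(E)|=a^5\cdot 11^2$.

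Because $W_j$ has $\mathbb{F}_{11}$-dimension $j$, one has $[\Lambda_j:\Lambda_0]=11^j$, and passing to the overlattice divides the covolume, hence the Pfaffian, by $11^j$; thus $|\mathrm{Pf}_{\Lambda_j}(E)|=a^5\,11^{2-j}$. Principality forces this to equal $1$, i.e.\ $a^5=11^{\,j-2}$ with $a\geq 1$ an integer and $0\leq j\leq 4$. A power of $11$ is a fifth power only when its exponent is a multiple of $5$, so the unique admissible solution is $j=2$, $a=1$. This yields at once $H_1(S,\mathbb{Z})=\Lambda_2$, $H=\frac{2}{\sqrt{11}}I_5$, and $c_1(\Theta)=\Im m(H)=\frac{i}{\sqrt{11}}\sum_{k=1}^5 dx_k\wedge d\bar x_k$. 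It then remains to check $\Lambda_2=R_2$, which is routine: reducing the two $\frac{1}{1+2\nu}$-generators of $R_2$ modulo $\Lambda_0$ gives $w_2$ and $w_1+w_2$, a basis of $W_2$, while clearing the denominator $1+2\nu$ shows $v_3,v_4\in R_2$ so that $R_2\supset\Lambda_0$; hence $R_2=\phi^{-1}(W_2)=\Lambda_2$.

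The delicate part is entirely the covolume/Pfaffian bookkeeping of the second step: normalizing the standard symplectic form correctly, carrying the scalar $(a\frac{2}{\sqrt{11}})^5$, and evaluating $\det(A\bar A^{\,t})$ through the Vandermonde identity and $\prod_{k=1}^{10}(1-\xi^k)=\Phi_{11}(1)=11$. Everything else — the reduction to the flag $W_j$, the fifth-power argument isolating $(a,j)=(1,2)$, and the identification $\Lambda_2=R_2$ — is then immediate.
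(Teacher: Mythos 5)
Your proposal is correct, and it follows the same overall strategy as the paper: both arguments reduce the problem to the flag of $\widehat{M}_\tau$-stable subspaces $W_0\subset\cdots\subset W_4$, compute the Pfaffian of $c_1(\Theta)=\Im m(H)$ on each candidate lattice $\Lambda_j$, and use principality ($\mathrm{Pf}=1$) together with the integrality of $a$ to force $j=2$ and $a=1$; the identification $\Lambda_2=R_2$ is also handled identically. The one genuine difference is how the Pfaffian is evaluated: the paper states that ``with the help of a computer'' one verifies $P_j=a^{10}11^{4-2j}$, whereas you derive the equivalent value $|\mathrm{Pf}_{\Lambda_j}(E)|=a^5\,11^{2-j}$ in closed form, via $|\mathrm{Pf}_L(E)|=\bigl(a\tfrac{2}{\sqrt{11}}\bigr)^5\mathrm{covol}(L)$, the covolume factorization $\mathrm{covol}(\Lambda_0)=|\det A|^2(\Im m\,\nu)^5$, the Vandermonde identity, the observation that the pairwise differences of the quadratic residues $\{1,3,4,5,9\}$ hit each nonzero residue class mod $11$ exactly twice, and $\Phi_{11}(1)=11$, followed by division by the index $[\Lambda_j:\Lambda_0]=11^j$. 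I checked these steps and they are all sound (in particular your $a^5\,11^{2-j}$ agrees with the paper's $P_j=a^{10}11^{4-2j}$ for the squared Pfaffian), so your version has the merit of making the decisive numerical step verifiable by hand rather than by machine, at the cost of some careful normalization bookkeeping; the paper's version is shorter but leaves the key computation as a black box.
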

\begin{proof}
Let $c_{1}(\Theta)=\Im m(H)=i\frac{a}{\sqrt{11}}\sum dx_{k}\wedge d\bar{x}_{k}$
be the alternating form of the principal polarization $\Theta$. Let
$\lambda_{1},\dots,\lambda_{10}$ be a basis of a lattice $\Lambda$.
By definition, the square of the Pfaffian $Pf_{\Theta}(\Lambda)$
of $\Lambda$ is the determinant of the matrix \[
\left(c_{1}(\Theta)(\lambda_{j},\lambda_{k})\right)_{1\leq j,k\leq10}.\]
 Since $\Theta$ is a principal polarization, we have $Pf_{\Theta}(H_{1}(S,\mathbb{Z}))=1$.\\
It is easy to find a basis of $\Lambda_{j}$ ($j\in\{0,\dots,4\}$).
For example, the space $W_{2}$ is generated by $w_{2}=t_{1}-2t_{2}+t_{3}$
and $w_{1}+w_{2}=t_{2}-2t_{3}+t_{4}$ and as \[
\begin{array}{c}
\phi(\frac{1}{1+2\nu}(v_{0}-3v_{1}+3v_{2}-v_{3}))=w_{2},\\
\phi(\frac{1}{1+2\nu}(v_{1}-3v_{2}+3v_{3}-v_{4}))=w_{1}+w_{2},\end{array}\]
 the lattice $R_{2}$ (that contains $\Lambda_{0}$) is equal to $\Lambda_{2}$.
\\
Then, with the help of a computer, we can calculate the square
of the Pfaffian $P_{j}$ of the lattice $\Lambda_{j}$ and verify
that it is equal to:\[
a^{10}11^{4-2j}\]
where $a$ is the integer of Lemma \ref{forme hermitienne de la cub de Klein}.
As $a$ is a positive integer, the only possibility that $P_{j}$
equals $1$ is $j=2$ and $a=1$.\end{proof}
\begin{rem}
\label{quartique de Klein}Let $C$ be the Klein quartic curve : this
curve is canonically embedded into $\mathbb{P}^{2}=\mathbb{P}(H^{0}(C,\Omega_{C})^{*})$
and there exists a basis $x_{1},x_{2},x_{3}$ of $H^{0}(C,\Omega_{C})$
such that $C=\{x_{1}^{3}x_{2}+x_{2}^{3}x_{3}+x_{3}^{3}x_{1}=0\}$.
The automorphism group of $C$ is $PSL_{2}(\mathbb{F}_{7})$. By taking
exactly the same arguments as the Klein cubic threefold, it is possible
to compute the period lattice $H_{1}(C,\mathbb{Z})\subset H^{0}(C,\Omega_{C})^{*}$. 
\end{rem}

\section{The Néron-Severi group of the Fano surface of the Klein cubic.}

Let us define: \[
\begin{array}{c}
u_{1}=\frac{1}{1+2\nu}(v_{0}-3v_{1}+3v_{2}-v_{3}),\, u_{2}=\frac{1}{1+2\nu}(v_{1}-3v_{2}+3v_{3}-v_{4}),\\
u_{3}=v_{0},\; u_{4}=v_{1},\; u_{5}=v_{2},\end{array}\]
and let $y_{1},\dots,y_{5}\in H^{0}(\Omega_{S})$ be the dual basis
of $u_{1},\dots,u_{5}.$ \\
Let be $k,\,1\leq k\leq5$. The image of $H_{1}(S,\mathbb{Z})$
by $y_{k}\in H^{0}(\Omega_{S})$ is $\mathbb{Z}[\nu]$, and this form
is the analytic representation of a morphism of Abelian varieties
\[
r_{k}:\Alb(S)\rightarrow\mathbb{E}=\mathbb{C}/\mathbb{Z}[\nu].\]
By Theorem \ref{thmThe-attice}, the morphisms $r_{1},\dots,r_{5}$
form a basis of the $\mathbb{Z}[\nu]$-module of rank $5$ of homomorphisms
between $\Alb(S)$ and $\mathbb{E}$.

We denote by $\Lambda_{A}^{*}$ the free $\mathbb{Z}[\nu]$-module
of rank $5$ generated by $y_{1},\dots,y_{5}$ and for $\ell\in\Lambda_{A}^{*}\setminus\{0\}$,
we denote by $\Gamma_{\ell}:\Alb(S)\rightarrow\mathbb{E}$ the morphism
whose analytic representation is $\ell:H^{0}(\Omega_{S})^{*}\rightarrow\mathbb{C}$.\\
Let $\vartheta:S\rightarrow\Alb(S)$ be a fixed Albanese morphism.
We denote by  $\gamma_{\ell}:S\rightarrow\mathbb{E}$ the morphism
$\gamma_{\ell}=\Gamma_{\ell}\circ\vartheta$ and we denote by  $F_{\ell}$
the numerical equivalence class of a fibre of $\gamma_{\ell}$ (this
class is independent of the choice of $\vartheta$).\\
We define the scalar product of two forms $\ell,\ell'\in\Lambda_{A}^{*}$
by: \[
\left\langle \ell,\ell'\right\rangle =\sum_{k=1}^{k=5}\ell(e_{k})\overline{\ell'(e_{k})}\]
 and the norm of $\ell$ by:\[
\left\Vert \ell\right\Vert =\sqrt{\left\langle \ell,\ell\right\rangle }.\]
We denote by $\NS(X)$ the Néron-Severi group of a variety $X$. For
a point $s$ of $S$, we denote by $C_{s}$ the incidence divisor
that parametrizes the lines on $F$ that cut the line corresponding
to the point $s$. The aim of this section is to prove the following
result: 
\begin{thm}
\label{la forme Q pour 11}1) Let $\ell,\ell'$ be non-zero elements
of $\Lambda_{A}^{*}$. The fibre $F_{\ell}$ has arithmetic genus:\[
g(F_{\ell})=1+3\left\Vert \ell\right\Vert ^{2},\]
satisfies $C_{s}F_{\ell}=2\left\Vert \ell\right\Vert ^{2}$ and :
\[
F_{\ell}F_{\ell'}=\left\Vert \ell\right\Vert ^{2}\left\Vert \ell'\right\Vert ^{2}-\left\langle \ell,\ell'\right\rangle \left\langle \ell',\ell\right\rangle .\]
2) The image of the morphism $\vartheta^{*}:\NS(\Alb(S))\rightarrow\NS(S)$
is a rank $25$ sub-lattice of discriminant $2^{2}11^{10}$. \\
3) The following $25$ fibres \[
\left\{ \begin{array}{cc}
F_{y_{k}} & k\in\{1,\dots,5\},\\
F_{y_{k}+y_{l}} & 1\leq k<l\leq5,\\
F_{y_{k}+\nu y_{l}} & 1\leq k<l\leq5,\end{array}\right.\]
form a $\mathbb{Z}$-basis of $\vartheta^{*}\NS(\Alb(S))$ and together
with the class of the incident divisor $C_{s}$ ($s\in S$), they
generate $\NS(S)$. The lattice $\NS(S)$ has discriminant $11^{10}$. 
\end{thm}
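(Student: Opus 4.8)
The goal is to prove Theorem \ref{la forme Q pour 11}, which computes intersection-theoretic data for the fibre classes $F_\ell$ and identifies a basis and the discriminant of $\vartheta^*\NS(\Alb(S))$ and of $\NS(S)$. My plan rests on the fact, recalled in the introduction, that the class of $S\hookrightarrow\Alb(S)$ equals $\frac{1}{3!}\Theta^3$, together with the explicit Hermitian form $H=\frac{2}{\sqrt{11}}I_5$ computed in Theorem \ref{thmThe-attice}. The key computational principle is that all intersection numbers on $S$ can be read off as integrals against the pulled-back classes $\vartheta^*c_1(\Theta)$ and $\vartheta^*c_1(\mathcal{O}(\Gamma_\ell))$ on $\Alb(S)$, reducing everything to algebra in the Hermitian lattice $(\Lambda_A^*,\langle\cdot,\cdot\rangle)$.

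\begin{proof}[Proof strategy for part 1)]
I would first express the fibre class $F_\ell$ in terms of the polarization. The morphism $\Gamma_\ell:\Alb(S)\to\mathbb{E}$ pulls back the point class on $\mathbb{E}$ to a divisor class on $\Alb(S)$ whose first Chern class is, up to the normalizing factor, the Hermitian form $\ell\otimes\bar\ell$ restricted appropriately; intersecting with $\frac{1}{3!}\Theta^3=[S]$ and using $H=\frac{2}{\sqrt{11}}I_5$ yields a rank-one Hermitian form expressed through $\|\ell\|^2$. Concretely, to compute $F_\ell F_{\ell'}=\vartheta^*c_1(\Gamma_\ell)\cdot\vartheta^*c_1(\Gamma_{\ell'})$ I would integrate the product of two $(1,1)$-forms against $\frac{1}{3!}\Theta^3$ on $\Alb(S)$. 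This is a Gaussian-type integral over $\mathbb{E}^5$ (using $\Alb(S)\simeq\mathbb{E}^5$) and evaluates to a polynomial in the entries of the Gram matrix, giving the stated formula $F_\ell F_{\ell'}=\|\ell\|^2\|\ell'\|^2-\langle\ell,\ell'\rangle\langle\ell',\ell\rangle$; specializing $\ell=\ell'$ gives $F_\ell^2=\|\ell\|^4-|\langle\ell,\ell\rangle|^2=0$, consistent with $F_\ell$ being a fibre class. The genus formula follows from adjunction, $2g(F_\ell)-2=F_\ell^2+K_SF_\ell=K_SF_\ell$, once I identify $K_S$ with a multiple of $c_1(\Theta)$ pulled back (the canonical class of a Fano surface is proportional to the polarization); the intersection $K_S F_\ell$ then reduces to $\|\ell\|^2$ up to the universal constant $3$, and likewise $C_s F_\ell$ is computed from the incidence divisor class, known to be homologous to a section of the polarization.
\end{proof}

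\begin{proof}[Proof strategy for parts 2) and 3)]
For the lattice structure, I would use that $\vartheta^*:\NS(\Alb(S))\to\NS(S)$ has image generated by the classes $F_\ell$ together with the incidence divisor class $C_s$. Since $\NS(\Alb(S))$ for $\Alb(S)$ of CM type $\mathbb{Z}[\nu]$ is computed from the Hermitian $\mathbb{Z}[\nu]$-module $\Lambda_A^*$ of rank $5$, its real dimension is $25=5^2$, matching $h^{1,1}(S)$; a $\mathbb{Z}$-basis is given by the Hermitian forms $y_k\otimes\bar y_k$, $y_k\otimes\bar y_l+y_l\otimes\bar y_k$, and $\nu y_k\otimes\bar y_l+\bar\nu y_l\otimes\bar y_k$, which correspond under $\vartheta^*$ to the $25$ fibre classes listed in part 3). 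I would verify this is a basis by checking that the $25\times 25$ Gram matrix of intersection numbers, computed from the formula in part 1), is nondegenerate with the claimed discriminant. Computing $\det$ of this Gram matrix is the crux: one organizes the $25$ generators into blocks and uses the rank-one structure $F_\ell F_{\ell'}=\|\ell\|^2\|\ell'\|^2-\langle\ell,\ell'\rangle\langle\ell',\ell\rangle$, which presents the intersection form as built from the Hermitian form on $\Lambda_A^*$; the discriminant $2^211^{10}$ should emerge from a product of the Hermitian discriminant $11^{5}$-type contributions. Finally, adjoining $C_s$ enlarges $\vartheta^*\NS(\Alb(S))$ to $\NS(S)$ as an index-$2$ overlattice, which divides the discriminant by $2^2$ to give $11^{10}$, matching Theorem \ref{le r=0000E9seau de A de klein}(2).
\end{proof}

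\textbf{Main obstacle.} The hard part will be the determinant computation in part 2): assembling the explicit $25\times25$ intersection matrix from the bilinear formula and evaluating its determinant to obtain $2^211^{10}$. This requires either a careful block decomposition exploiting the $\mathbb{Z}[\nu]$-structure—writing the real quadratic form in terms of the Hermitian form and computing the discriminant of the associated $\mathbb{Z}[\nu]$-lattice via norms of $(1+2\nu)$, whose norm is $11$—or a direct machine computation as the authors do for the Pfaffian in Theorem \ref{thmThe-attice}. I expect the cleanest route is to first establish part 1) fully, then exhibit the stated $25$ classes as coming from a transparent $\mathbb{Z}$-basis of the Hermitian module $\NS(\Alb(S))$, and only then verify nondegeneracy and the discriminant by the determinant evaluation; the index-$2$ statement for $C_s$ follows because $C_s$ itself, while living in $\NS(S)$, is not in the $\mathbb{Z}$-span of the $F_\ell$ but $2C_s$ is, a fact traceable to $C_sF_\ell=2\|\ell\|^2$ being even.
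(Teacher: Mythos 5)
Your overall route is the paper's route: represent the fibre classes by the pulled-back forms $\Gamma_\ell^*\eta=\frac{i}{\sqrt{11}}d\ell\wedge d\bar\ell$, integrate against $[S]=\frac{1}{3!}\Theta^3$ using the diagonal Hermitian form $\frac{2}{\sqrt{11}}I_5$ to get $F_\ell F_{\ell'}=\|\ell\|^2\|\ell'\|^2-\langle\ell,\ell'\rangle\langle\ell',\ell\rangle$, obtain the genus and $C_sF_\ell$ from $K\equiv\frac{3}{2}\vartheta^*c_1(\Theta)$ and $3C_s\equiv K$, identify the $25$ generators of $\NS(\Alb(S))$ via symmetric endomorphisms for the Rosati involution (the paper's Lemmas \ref{le NS de A de klein} and \ref{base + simple}), and evaluate the $25\times25$ Gram determinant by machine, exactly as the paper does. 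Parts 1) and 2) of your plan are sound and match the paper step for step.

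The one genuine gap is your justification of the passage from $\vartheta^*\NS(\Alb(S))$ to $\NS(S)$. You assert that the index is $2$ because $C_s\notin\mathbb{Z}\text{-span}$ of the $F_\ell$ while $2C_s$ is, ``traceable to $C_sF_\ell=2\|\ell\|^2$ being even.'' This parity observation proves neither direction: a class in the sublattice can perfectly well have even intersection with every $F_\ell$, and evenness does not exclude membership either. What is true and provable from your data is only that $2C_s\equiv\vartheta^*c_1(\Theta)$ lies in $\vartheta^*\NS(\Alb(S))$ (from $3C_s\equiv K\equiv\frac{3}{2}\vartheta^*c_1(\Theta)$). The two remaining facts --- that $C_s$ itself is \emph{not} in $\vartheta^*\NS(\Alb(S))$, and, more importantly, that $\NS(S)$ is generated by $\vartheta^*\NS(\Alb(S))$ together with $C_s$ (so the index is exactly $2$ and not, say, $1$, $11$ or $22$, all of which are compatible with the discriminant $2^211^{10}$ of the sublattice) --- are not consequences of your intersection computations; the paper imports them as general results from \cite{Roulleau}. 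Without that input your argument cannot conclude that $\NS(S)$ has discriminant $11^{10}$.
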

We identify elements of the Néron-Severi group of $\Alb(S)$ with
alternating forms.
\begin{lem}
\label{le NS de A de klein}The Néron-Severi group of $\Alb(S)$ is
generated by the $25$ forms:

\[
\left\{ \begin{array}{cc}
\frac{i}{\sqrt{11}}dy_{k}\wedge d\bar{y}_{k} & k\in\{1,\dots,5\},\\
\frac{i}{\sqrt{11}}(dy_{k}\wedge d\bar{y}_{l}+dy_{l}\wedge d\bar{y}_{k}) & 1\leq k<l\leq5,\\
\frac{i}{\sqrt{11}}(\nu dy_{k}\wedge d\bar{y}_{l}+\overline{\nu}dy_{l}\wedge d\bar{y}_{k}), & 1\leq k<l\leq5.\end{array}\right.\]
\end{lem}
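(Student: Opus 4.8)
The plan is to use the Appell--Humbert description of the N\'eron--Severi group of a complex abelian variety. Writing $V=H^{0}(\Omega_{S})^{*}$ and $\Lambda=H_{1}(S,\mathbb{Z})$, one has (see \cite{Birkenhake}) a canonical identification of $\NS(\Alb(S))$ with the group of Hermitian forms $H$ on $V$ whose imaginary part $\Im m\,H$ takes integral values on $\Lambda$; under this identification a Hermitian matrix $(h_{kl})$ corresponds to the real $(1,1)$-form $\frac{i}{2}\sum_{k,l}h_{kl}\,dy_{k}\wedge d\bar{y}_{l}$, the normalization being fixed so that $H=\frac{2}{\sqrt{11}}I_{5}$ gives exactly the class $c_{1}(\Theta)=\frac{i}{\sqrt{11}}\sum_{k}dy_{k}\wedge d\bar{y}_{k}$ of Theorem \ref{thmThe-attice}. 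By the discussion preceding the lemma the vectors $u_{1},\dots,u_{5}$ form a $\mathbb{Z}[\nu]$-basis of $\Lambda$, so the dual basis $y_{1},\dots,y_{5}$ identifies $V$ with $\mathbb{C}^{5}$ and $\Lambda$ with $\mathbb{Z}[\nu]^{5}$; I write each Hermitian form as a matrix $(h_{kl})$, with $h_{lk}=\bar{h}_{kl}$, in this basis.

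First I would reduce the integrality condition to the individual entries. Since $E=\Im m\,H$ is a $\mathbb{Z}$-bilinear alternating form on $\Lambda=\bigoplus_{k}\mathbb{Z}[\nu]u_{k}$, it is integral on $\Lambda$ if and only if $\Im m(h_{kl}\alpha\bar{\beta})\in\mathbb{Z}$ for all $\alpha,\beta\in\mathbb{Z}[\nu]$ and all $k,l$, and these conditions decouple over the pairs $(k,l)$. Taking $\beta=1$ shows each off-diagonal condition is equivalent to $\Im m(h_{kl}\gamma)\in\mathbb{Z}$ for every $\gamma\in\mathbb{Z}[\nu]$, while the diagonal entries $h_{kk}$ are real, so their condition reads $h_{kk}\,\Im m(\mathbb{Z}[\nu])\subseteq\mathbb{Z}$.

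The arithmetic heart of the argument is then to determine two lattices attached to $\mathbb{Z}[\nu]$. From $\nu=\frac{-1+i\sqrt{11}}{2}$ one gets $\Im m(\mathbb{Z}[\nu])=\frac{\sqrt{11}}{2}\mathbb{Z}$, whence the diagonal condition becomes $h_{kk}\in\frac{2}{\sqrt{11}}\mathbb{Z}$. For the off-diagonal entries I would show that the trace-dual lattice $\{h\in\mathbb{C}:\Im m(h\gamma)\in\mathbb{Z}\ \forall\gamma\in\mathbb{Z}[\nu]\}$ equals $\frac{2}{\sqrt{11}}\mathbb{Z}[\nu]$. The inclusion $\supseteq$ is immediate: for $\delta,\gamma\in\mathbb{Z}[\nu]$ one has $\Im m(\frac{2}{\sqrt{11}}\delta\gamma)=\frac{2}{\sqrt{11}}\cdot\frac{\delta\gamma-\overline{\delta\gamma}}{2i}\in\mathbb{Z}$, because $\zeta-\bar{\zeta}\in i\sqrt{11}\,\mathbb{Z}$ for every $\zeta\in\mathbb{Z}[\nu]$. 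Equality follows from a covolume count: writing $h=p+qi$, the map $h\mapsto(\Im m(h),\Im m(h\nu))=(q,\frac{\sqrt{11}}{2}p-\frac{1}{2}q)$ has determinant $\pm\frac{\sqrt{11}}{2}$ in the coordinates $(p,q)$, so the trace-dual lattice has covolume $\frac{2}{\sqrt{11}}$, which is exactly the covolume of $\frac{2}{\sqrt{11}}\mathbb{Z}[\nu]$. This identification of the codifferent is the step I expect to be the main obstacle, since everything else is linear-algebra bookkeeping.

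Finally I would assemble the answer. Each diagonal slot contributes the single generator $h_{kk}=\frac{2}{\sqrt{11}}$, that is the form $\frac{i}{\sqrt{11}}dy_{k}\wedge d\bar{y}_{k}$ ($5$ forms); each off-diagonal pair $k<l$ contributes the rank-two lattice $\frac{2}{\sqrt{11}}\mathbb{Z}[\nu]$, with $\mathbb{Z}$-generators $h_{kl}=\frac{2}{\sqrt{11}}$ and $h_{kl}=\frac{2}{\sqrt{11}}\nu$ corresponding, via the dictionary above together with $h_{lk}=\bar{h}_{kl}$, to $\frac{i}{\sqrt{11}}(dy_{k}\wedge d\bar{y}_{l}+dy_{l}\wedge d\bar{y}_{k})$ and $\frac{i}{\sqrt{11}}(\nu\,dy_{k}\wedge d\bar{y}_{l}+\bar{\nu}\,dy_{l}\wedge d\bar{y}_{k})$ ($2\binom{5}{2}=20$ forms). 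These $25$ forms are a free $\mathbb{Z}$-basis of $\NS(\Alb(S))$, of rank $5+2\cdot 10=25=h^{1,1}$, which is the assertion.
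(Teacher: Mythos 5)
Your proof is correct, and it reaches the lemma by a genuinely different route from the paper's. The paper fixes the auxiliary product polarization $H'=\frac{2}{\sqrt{11}}I_{5}$ (diagonal in the basis $u_{1},\dots,u_{5}$), identifies $\End(\Alb(S))$ with $M_{5}(\mathbb{Z}[\nu])$, and invokes the isomorphism $\phi_{H'}\colon\End^{s}(\Alb(S))\rightarrow\NS(\Alb(S))$ of Birkenhake--Lange, so that the entire integrality question is outsourced to classifying the Rosati-symmetric matrices $^{t}A=\bar{A}$ over $\mathbb{Z}[\nu]$; the $25$ forms are then the images of the evident $\mathbb{Z}$-basis $e_{kk}$, $e_{kl}+e_{lk}$, $\nu e_{kl}+\bar{\nu}e_{lk}$. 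You instead apply Appell--Humbert directly and compute by hand the lattice of Hermitian matrices whose imaginary part is integral on $\Lambda=\bigoplus_{k}\mathbb{Z}[\nu]u_{k}$; the decoupling over pairs $(k,l)$ is legitimate because integrality need only be tested on the $\mathbb{Z}$-generators $u_{k},\nu u_{k}$, and your identification of the dual lattice $\{h:\Im m(h\gamma)\in\mathbb{Z}\ \forall\gamma\in\mathbb{Z}[\nu]\}=\frac{2}{\sqrt{11}}\mathbb{Z}[\nu]$ by inclusion plus a covolume count is correct. Both arguments rest on the same essential input from Theorem \ref{thmThe-attice}, namely that $u_{1},\dots,u_{5}$ is a $\mathbb{Z}[\nu]$-basis of $H_{1}(S,\mathbb{Z})$; yours is more self-contained (no appeal to the structure of the symmetric endomorphism group), while the paper's is shorter once the cited isomorphism is granted. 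One cosmetic slip: $\frac{i}{\sqrt{11}}\sum_{k}dy_{k}\wedge d\bar{y}_{k}$ is not $c_{1}(\Theta)$ --- Theorem \ref{thmThe-attice} gives $c_{1}(\Theta)=\frac{i}{\sqrt{11}}\sum_{k}dx_{k}\wedge d\bar{x}_{k}$ in the coordinates $x_{k}$ dual to $e_{k}$, and the change of basis from $e_{k}$ to $u_{k}$ is not unitary; the form you wrote is the class of the product polarization $H'$. This does not affect your argument, since the dictionary $(h_{kl})\mapsto\frac{i}{2}\sum h_{kl}\,dy_{k}\wedge d\bar{y}_{l}$ that you actually use is the standard, basis-independent normalization.
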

\begin{proof}
The Hermitian form $H'=\frac{2}{\sqrt{11}}I_{5}$ in the basis $u_{1},\ldots,u_{5}$
defines a principal polarization of $\Alb(S)$. Let $\End^{s}(\Alb(S))$
be the group of symmetrical morphisms for the Rosati involution associated
to $H'$. An endomorphism of $\Alb(S)$ can be represented by a matrix
$A\in M_{5}(\mathbb{Z}[\nu])$ in the basis $u_{1},\ldots,u_{5}$.
The symmetrical endomorphisms satisfy $^{t}AH'=H'\bar{A}$ i.e. $^{t}A=\bar{A}$.
A basis $\mathcal{B}$ of the group of symmetrical elements is :\[
\left\{ \begin{array}{cc}
e_{kk} & k\in\{1,\dots,5\},\\
e_{kl}+e_{lk} & 1\leq k<l\leq5,\\
\nu e_{kl}+\overline{\nu}e_{lk} & 1\leq k<l\leq5,\end{array}\right.\]
where $e_{kl}$ is the matrix with entry $1$ in the intersection
of line $k$ and row $l$ and $0$ elsewhere.\\
By \cite{Birkenhake}, Proposition $5.2.1$ and Remark 5.2.2.,
the map:\[
\begin{array}{ccc}
\phi_{H'}:\End^{s}(\Alb(S)) & \rightarrow & \NS(\Alb(S))\\
A & \mapsto & \Im m(\cdot^{t}AH'\bar{\cdot})\end{array}\]
is an isomorphism of groups. We obtain the base of the Lemma by taking
the image by $\phi_{H'}$ of the base $\mathcal{B}$.
\end{proof}
The Néron-Severi group of the curve $\mathbb{E}=\mathbb{C}/\mathbb{Z}[\nu]$
is the $\mathbb{Z}$-module generated by the form $\eta=\frac{i}{\sqrt{11}}dz\wedge d\bar{z}$.
Let be $\ell\in\Lambda_{A}^{*}\setminus\{0\}$. We have: \[
\Gamma_{\ell}^{*}\eta=\frac{i}{\sqrt{11}}d\ell\wedge d\overline{\ell}\]
and this form is the Chern class of the divisor $\Gamma_{\ell}^{*}0$.
\begin{lem}
\label{base + simple}The $25$ forms: \[
\left\{ \begin{array}{cc}
\eta_{k}=\Gamma_{y_{k}}^{*}\eta & k\in\{1,\dots,5\},\\
\eta_{k,l}^{1}=\Gamma_{y_{k}+y_{l}}^{*}\eta & 1\leq k<l\leq5,\\
\eta_{k,l}^{\nu}=\Gamma_{y_{k}+\nu y_{l}}^{*}\eta & 1\leq k<l\leq5.\end{array}\right.\]
 are a basis of the Néron-Severi group of $\Alb(S)$.\end{lem}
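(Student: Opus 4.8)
The plan is to express each of the 25 pullback forms in the explicit $\mathbb{Z}$-basis of $\NS(\Alb(S))$ already produced in Lemma \ref{le NS de A de klein}, and then to check that the resulting transition matrix lies in $GL_{25}(\mathbb{Z})$. I abbreviate the three families of that basis by $a_k=\frac{i}{\sqrt{11}}dy_k\wedge d\bar y_k$, $b_{kl}=\frac{i}{\sqrt{11}}(dy_k\wedge d\bar y_l+dy_l\wedge d\bar y_k)$ and $c_{kl}=\frac{i}{\sqrt{11}}(\nu\,dy_k\wedge d\bar y_l+\bar\nu\,dy_l\wedge d\bar y_k)$.

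First I would use the identity $\Gamma_\ell^*\eta=\frac{i}{\sqrt{11}}d\ell\wedge d\bar\ell$ recalled just above the Lemma, and expand each pullback by bilinearity. Immediately $\eta_k=\Gamma_{y_k}^*\eta=a_k$, and expanding $d(y_k+y_l)\wedge d\overline{(y_k+y_l)}$ gives $\eta_{k,l}^1=a_k+a_l+b_{kl}$. The only step needing a small arithmetic input is $\eta_{k,l}^\nu$: since $d(\nu y_l)=\nu\,dy_l$ while $\overline{\nu y_l}=\bar\nu\,\bar y_l$, expanding $d(y_k+\nu y_l)\wedge d\overline{(y_k+\nu y_l)}$ produces the term $|\nu|^2\,dy_l\wedge d\bar y_l$ together with the twisted cross term $\bar\nu\,dy_k\wedge d\bar y_l+\nu\,dy_l\wedge d\bar y_k$. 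Here I would invoke the two relations $\nu\bar\nu=3$ and $\nu+\bar\nu=-1$ satisfied by $\nu=\frac{-1+i\sqrt{11}}{2}$: writing $p=dy_k\wedge d\bar y_l$ and $q=dy_l\wedge d\bar y_k$, the identity $\bar\nu p+\nu q=-(p+q)-(\nu p+\bar\nu q)$ rewrites the twisted cross term as $-b_{kl}-c_{kl}$, so $\eta_{k,l}^\nu=a_k+3a_l-b_{kl}-c_{kl}$.

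With these three formulas the transition matrix from $(a_k,b_{kl},c_{kl})$ to $(\eta_k,\eta_{k,l}^1,\eta_{k,l}^\nu)$ becomes block triangular once one lists the $a$'s first and then groups the remaining generators pairwise by $(k,l)$. Indeed $\eta_k=a_k$ yields an identity block $I_5$, the $b,c$-coordinates of $\eta_{k,l}^1,\eta_{k,l}^\nu$ yield for each pair $(k,l)$ a diagonal block $\left(\begin{smallmatrix}1&0\\-1&-1\end{smallmatrix}\right)$ of determinant $-1$, and the $a_k,a_l$-contributions of $\eta_{k,l}^1,\eta_{k,l}^\nu$ occupy the off-diagonal block and hence do not affect the determinant. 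The determinant therefore equals $(-1)^{10}=1$, the matrix lies in $GL_{25}(\mathbb{Z})$, and the 25 forms form a $\mathbb{Z}$-basis of $\NS(\Alb(S))$.

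The whole argument is routine bilinear bookkeeping, so there is no serious obstacle; the single point requiring a little care is the reduction of the twisted cross term in $\eta_{k,l}^\nu$, and this rests entirely on the two identities $\nu+\bar\nu=-1$ and $\nu\bar\nu=3$.
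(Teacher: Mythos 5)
Your proof is correct and follows essentially the same route as the paper: both rest on the bilinear expansion $\Gamma_{y_k+ay_l}^{*}\eta=\frac{i}{\sqrt{11}}(dy_k\wedge d\bar y_k+\bar a\,dy_k\wedge d\bar y_l+a\,dy_l\wedge d\bar y_k+a\bar a\,dy_l\wedge d\bar y_l)$ for $a\in\{1,\nu\}$, compared against the basis of Lemma \ref{le NS de A de klein}. The only cosmetic difference is that you conclude by checking the transition matrix is unimodular, whereas the paper concludes by expressing that basis as $\mathbb{Z}$-combinations of the $25$ pullback forms; these are equivalent.
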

\begin{proof}
Let $1\leq k\leq5$ be an integer. The element $\Gamma_{y_{k}}^{*}\eta=\frac{i}{\sqrt{11}}dy_{k}\wedge d\bar{y}_{k}$
lies in the basis of Lemma \ref{le NS de A de klein}. Let $1\leq l<k\leq5$
be integers, let be $a\in\{1,\nu\}$, and $\ell=y_{k}+ay_{l}$. We
have: \[
\Gamma_{\ell}^{*}\eta=\frac{i}{\sqrt{11}}(dy_{k}\wedge d\bar{y}_{k}+\bar{a}dy_{k}\wedge d\bar{y}_{l}+ady_{l}\wedge d\bar{y}_{k}+a\bar{a}dy_{l}\wedge d\bar{y}_{l}),\]
this proves, when we take $a=1$ and next $a=\nu$, that the forms
of the basis of Lemma \ref{le NS de A de klein} are $\mathbb{Z}$-linear
combinations of the forms $\eta_{k},\eta_{k,l}^{1},\eta_{k,l}^{\nu},\,1\leq k,l\leq5$.
\end{proof}
Let us prove Theorem \ref{la forme Q pour 11}.
\begin{proof}
By \cite{Clemens}, the homology class of $S$ in $\Alb(S)$ is equal
to $\frac{\Theta^{3}}{3!}$, thus the intersection of the fibres $F_{\ell}$
and $F_{\ell'}$ is equal to: \[
\int_{A}\frac{1}{3!}\wedge^{3}c_{1}(\Theta)\wedge\Gamma_{\ell}^{*}\eta\wedge\Gamma_{\ell'}^{*}\eta.\]
Write $\ell$ in the basis $x_{1},..,x_{5}$ : $\ell=a_{1}x_{1}+\dots+a_{5}x_{5}$
and $\ell'=b_{1}x_{1}+\dots+b_{5}x_{5}$, then: \[
\frac{1}{3!}(\frac{i}{\sqrt{11}})^{2}d\ell\wedge d\overline{\ell}\wedge d\ell'\wedge d\overline{\ell'}\wedge(\wedge^{3}c_{1}(\Theta))\]
 is equal to: \[
\begin{array}{c}
(\frac{i}{\sqrt{11}})^{5}(\sum a_{j}dx_{j})\wedge(\sum\bar{a}_{j}d\bar{x}_{j})\wedge(\sum b_{j}dx_{j})\wedge(\sum\bar{b}_{j}d\bar{x}_{j})\\
\wedge\sum_{h<j<k}dx_{h}\wedge d\bar{x}_{h}\wedge dx_{j}\wedge d\bar{x}_{j}\wedge dx_{k}\wedge d\bar{x}_{k}\end{array}\]
that is equal to: \[
(\sum_{k\not=j}(a_{k}\bar{a}_{k}b_{j}\bar{b}_{j}-a_{k}\bar{a}_{j}b_{j}\bar{b}_{k}))\frac{1}{5!}\wedge^{5}c_{1}(\Theta).\]
 But : $\int_{A}\frac{1}{5!}\wedge^{5}c_{1}(\Theta)=1$ because $\Theta$
is a principal polarization of $\Alb(S)$, hence:\[
\begin{array}{ccc}
F_{\ell}F_{\ell'}=\int_{A}\frac{1}{3!}\wedge^{3}c_{1}(\Theta)\wedge\Gamma_{\ell}^{*}\eta\wedge\Gamma_{\ell'}^{*}\eta & =\sum_{k\not=j}(a_{k}\bar{a}_{k}b_{j}\bar{b}_{j}-a_{k}\bar{a}_{j}b_{j}\bar{b}_{k})\\
 & =\left\Vert \ell\right\Vert ^{2}\left\Vert \ell'\right\Vert ^{2}-\left\langle \ell,\ell'\right\rangle \left\langle \ell',\ell\right\rangle .\end{array}\]
 By \cite{Clemens} (10.9) and Lemma 11.27, $\frac{3}{2}\vartheta^{*}c_{1}(\Theta)$
is the Poincaré dual of a canonical divisor $K$ of $S$, hence:\[
KF_{\ell}=\frac{3}{2}\vartheta^{*}c_{1}(\Theta)\vartheta^{*}\Gamma_{\ell}^{*}\eta=\frac{3}{2}\int_{A}\frac{1}{3!}\wedge^{4}c_{1}(\Theta)\wedge\Gamma_{\ell}^{*}\eta\]
and: \[
KF_{\ell}=\int_{A}6(\frac{i}{\sqrt{11}})^{5}(\sum a_{j}dx_{j})\wedge(\sum\bar{a}_{j}d\bar{x}_{j})\wedge\sum_{1\leq k\leq5}(\bigwedge_{j\not=k}(dx_{j}\wedge d\bar{x}_{j}))\]
so $KF_{\ell}=6\sum_{k=1}^{k=5}a_{k}\bar{a}_{k}=6\left\Vert \ell\right\Vert ^{2}$.
Thus we have: $g(F_{\ell})=(KF_{\ell}+0)/2+1=3\left\Vert \ell\right\Vert ^{2}+1$.
By \cite{Clemens}, $3C_{s}$ is numerically equivalent to $K$, hence
$C_{s}F_{\ell}=2\left\Vert \ell\right\Vert ^{2}$.

Lemma \ref{base + simple} gives us a basis $\eta_{1},...,\eta_{25}$
of $\NS(\Alb(S))$ and we know the intersections $\vartheta^{*}\eta_{k}\vartheta^{*}\eta_{l}$
in the Fano surface. With the help of a computer, we can verify that
the determinant of the intersection matrix: \[
(\vartheta^{*}\eta_{k}\vartheta^{*}\eta_{l})_{1\leq k,l\leq25}\]
is equal to $2^{2}11^{10}$. By general results of \cite{Roulleau},
the index of $\vartheta^{*}\NS(\Alb(S))\subset\NS(S)$ is $2$ and
$\NS(S)$ is generated by $\vartheta^{*}\NS(\Alb(S))$ and the class
of an incidence divisor $C_{s}$.\end{proof}
\begin{cor}
\label{corLetC}1) Let $C$ be a smooth curve of genus $>0$ and let
$\gamma:S\rightarrow C$ be a fibration with connected fibres. Then
there exists an isomorphism $j:\mathbb{E}\rightarrow C$ and a form
$\ell\in\Lambda_{A}^{*}$ such that $\gamma=j\circ\gamma_{\ell}$.
\\
2) The set of numerical classes of fibres of connected fibrations
of $S$ onto a curve of positive genus is in natural bijection with
$\mathbb{P}^{4}(\mathbb{Q}(\nu))$.\\
3) The fibres of these fibrations generate $\vartheta^{*}\NS(\Alb(S))$.
\end{cor}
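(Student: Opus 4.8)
The plan is to prove the three parts in turn, reducing everything to the explicit morphisms $\Gamma_{\ell}$ and to the intersection formula of Theorem \ref{la forme Q pour 11}.

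For Part 1, I would first invoke the universal property of the Albanese. Given a fibration $\gamma:S\rightarrow C$ with connected fibres onto a smooth curve $C$ of genus $g>0$, I compose with the Abel--Jacobi map $a_{C}:C\hookrightarrow\Jac(C)$; then $a_{C}\circ\gamma$ factors as $h\circ\vartheta$ with $h:\Alb(S)\rightarrow\Jac(C)$ a homomorphism up to translation, and $h$ is surjective because $\gamma$ is and $C$ generates $\Jac(C)$. The crucial point is to force $g=1$. For this I observe that $W:=\gamma^{*}H^{0}(\Omega_{C})\subset H^{0}(\Omega_{S})$ is isotropic for the wedge pairing $\wedge^{2}H^{0}(\Omega_{S})\rightarrow H^{0}(S,K_{S})$, $K_{S}=\wedge^{2}\Omega_{S}$, since for forms pulled back from a curve $\gamma^{*}\alpha\wedge\gamma^{*}\beta=\gamma^{*}(\alpha\wedge\beta)=0$. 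Because $S$ sits in the Grassmannian $G(2,5)$ of lines of $\mathbb{P}(H^{0}(\Omega_{S})^{*})$ with $K_{S}$ the restriction of the Plücker polarization and $S$ non-degenerate in the Plücker space $\mathbb{P}(\wedge^{2}H^{0}(\Omega_{S}))$, this wedge map is injective; hence $\wedge^{2}W=0$, so $\dim W\le 1$ and $g=1$. Thus $C$ is elliptic and $\Jac(C)\cong C$ is an elliptic quotient of $\Alb(S)\cong\mathbb{E}^{5}$. Since $\mathbb{E}$ is simple with $\End(\mathbb{E})=\mathbb{Z}[\nu]$ a principal ideal domain, every elliptic quotient of $\mathbb{E}^{5}$ is isomorphic to $\mathbb{E}$; fixing such an isomorphism $j:\mathbb{E}\rightarrow C$, the homomorphism $j^{-1}\circ h:\Alb(S)\rightarrow\mathbb{E}$ has analytic representation in $\Hom(\Alb(S),\mathbb{E})=\Lambda_{A}^{*}$, so $j^{-1}\circ h=\Gamma_{\ell}$ for some $\ell\in\Lambda_{A}^{*}$ and $\gamma=j\circ\gamma_{\ell}$.

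For Part 2, by Part 1 every connected fibration onto a positive-genus curve is $j\circ\gamma_{\ell}$, and connectedness of the fibres forces $\ell$ to be primitive in the free $\mathbb{Z}[\nu]$-module $\Lambda_{A}^{*}$ (otherwise $\Gamma_{\ell}$ factors through a non-trivial isogeny of $\mathbb{E}$ and the fibres split). As the units of $\mathbb{Z}[\nu]$ are $\{\pm 1\}$ and $\mathbb{Z}[\nu]$ is a PID, primitive vectors up to units correspond to $\mathbb{P}(\Lambda_{A}^{*}\otimes\mathbb{Q}(\nu))=\mathbb{P}^{4}(\mathbb{Q}(\nu))$, and $[\ell]\mapsto F_{\ell}$ is the candidate bijection. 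For injectivity I use Theorem \ref{la forme Q pour 11}: one has $F_{\ell}^{2}=\left\Vert \ell\right\Vert^{4}-\left\langle \ell,\ell\right\rangle\left\langle \ell,\ell\right\rangle=0$, and $F_{\ell}F_{\ell'}=\left\Vert \ell\right\Vert^{2}\left\Vert \ell'\right\Vert^{2}-\left\langle \ell,\ell'\right\rangle\left\langle \ell',\ell\right\rangle\ge 0$ with equality if and only if $\ell,\ell'$ are proportional (Cauchy--Schwarz). Hence $F_{\ell}=F_{\ell'}$ gives $F_{\ell}F_{\ell'}=F_{\ell}^{2}=0$, forcing $[\ell]=[\ell']$; surjectivity is clear since each primitive $\ell$ yields a genuine connected fibration $\gamma_{\ell}$.

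Part 3 is then immediate from Theorem \ref{la forme Q pour 11}(3): the twenty-five classes $F_{y_{k}}$, $F_{y_{k}+y_{l}}$, $F_{y_{k}+\nu y_{l}}$ are fibres of fibrations $\gamma_{\ell}$ and already form a $\mathbb{Z}$-basis of $\vartheta^{*}\NS(\Alb(S))$, while conversely each $F_{\ell}=\vartheta^{*}\Gamma_{\ell}^{*}\eta$ lies in $\vartheta^{*}\NS(\Alb(S))$; hence the fibres generate exactly $\vartheta^{*}\NS(\Alb(S))$. The main obstacle is the rigidity step in Part 1, namely the proof that $g(C)=1$: once the fibration is known to factor through the Albanese and one uses that $\mathbb{Z}[\nu]$ is a PID, the rest is formal, but the isotropy-plus-injectivity argument is where the geometry of the Fano surface (its Plücker embedding in $G(2,5)$, or equivalently the general classification of fibrations in \cite{Roulleau}) genuinely enters.
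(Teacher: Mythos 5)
Your overall route is the same as the paper's: the Castelnuovo argument via the injectivity of $\wedge^{2}H^{0}(\Omega_{S})\rightarrow H^{0}(S,\wedge^{2}\Omega_{S})$ to force $g(C)=1$, factorization through $\Alb(S)$, and the fact that $\mathbb{Z}[\nu]$ is a principal ideal domain to identify every elliptic quotient of $\Alb(S)\simeq\mathbb{E}^{5}$ with $\mathbb{E}$ and to match primitive forms $\ell$ (up to the units $\pm1$) with points of $\mathbb{P}^{4}(\mathbb{Q}(\nu))$. Your Cauchy--Schwarz argument for the injectivity of $[\ell]\mapsto F_{\ell}$, using the intersection formula of Theorem \ref{la forme Q pour 11}, is a welcome explicit justification of a point the paper only asserts parenthetically.

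There is, however, one step you dismiss as ``clear'' that is exactly where the paper has to work: surjectivity, i.e.\ that a primitive $\ell$ really does yield a \emph{connected} fibration of $S$. What your argument (and the paper's Lemma \ref{lem:Let-be-} 1)) establishes is that $\Gamma_{\ell}:\Alb(S)\rightarrow\mathbb{E}$ has connected fibres; it does not follow formally that the restriction $\gamma_{\ell}=\Gamma_{\ell}\circ\vartheta:S\rightarrow\mathbb{E}$ has connected fibres, since the trace on $S\hookrightarrow\Alb(S)$ of a connected fibre of $\Gamma_{\ell}$ could a priori be disconnected. The paper closes this by taking the Stein factorization $S\rightarrow C'\rightarrow\mathbb{E}$ of $\gamma_{\ell}$, applying part 1) to $S\rightarrow C'$ to conclude $C'=\mathbb{E}$ and $S\rightarrow C'=\gamma_{\ell'}$ with $\ell'$ primitive, and then noting that $\Gamma_{\ell}$ factors through $\Gamma_{\ell'}$, so $\ell$ and $\ell'$ differ by a unit and $\gamma_{\ell}$ was connected all along. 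You have all the ingredients for this repair, but as written the surjectivity half of your bijection in part 2) is unproved; the remainder (parts 1 and 3, and the injectivity half of part 2) is sound.
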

To prove Corollary \ref{corLetC}, we need the following Lemma:
\begin{lem}
\label{lem:Let-be-}1) Let be $\ell\in\Lambda_{A}^{*}\setminus\{0\},\,\ell=t_{1}y_{1}+\dots+t_{5}y_{5}$.
The fibration $\Gamma_{\ell}$ has connected fibres if and only if
$t_{1},\dots,t_{5}$ generate $\mathbb{Z}[\nu]$.\\
2) Let $\Gamma:\Alb(S)\rightarrow C$ be a morphism with connected
fibres onto an elliptic curve $C$. Then $C=\mathbb{E}$ and there
exists $\ell\in\Lambda_{A}^{*}$ such that $\Gamma=\Gamma_{\ell}$.\end{lem}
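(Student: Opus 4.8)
The plan is to handle the two parts with a single engine: the standard fact that a surjective homomorphism of abelian varieties has connected fibres precisely when it is onto on first homology. I would first record this in the form I need. If $f\colon A\to B$ is a surjective homomorphism of complex tori with analytic representation $\lambda\colon V_{A}\to V_{B}$ and lattices $\Lambda_{A}\subset V_{A}$, $\Lambda_{B}\subset V_{B}$, then $\ker f=\lambda^{-1}(\Lambda_{B})/\Lambda_{A}$, its identity component has tangent space $\ker\lambda$, and pushing forward by $\lambda$ identifies $\pi_{0}(\ker f)=\lambda^{-1}(\Lambda_{B})/(\ker\lambda+\Lambda_{A})$ with $\Lambda_{B}/\lambda(\Lambda_{A})=\mathrm{coker}(f_{*})$ (here I use that $\lambda$ is surjective, so $\lambda$ maps $\lambda^{-1}(\Lambda_{B})$ onto all of $\Lambda_{B}$). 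Hence the fibres of $f$ are connected if and only if $f_{*}\colon H_{1}(A,\mathbb{Z})\to H_{1}(B,\mathbb{Z})$ is surjective.

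For part 1) I would apply this with $f=\Gamma_{\ell}$, $A=\Alb(S)$ and $B=\mathbb{E}$. The induced map $(\Gamma_{\ell})_{*}\colon H_{1}(S,\mathbb{Z})\to\mathbb{Z}[\nu]$ is the restriction of $\ell$ to the lattice. By Theorem \ref{thmThe-attice} we have $H_{1}(S,\mathbb{Z})=\bigoplus_{k}\mathbb{Z}[\nu]u_{k}$, the forms satisfy $y_{k}(u_{l})=\delta_{kl}$, and the multiplication by $\nu$ coming from this $\mathbb{Z}[\nu]$-module structure acts on $H^{0}(\Omega_{S})^{*}$ as the scalar $\nu$; therefore $(\Gamma_{\ell})_{*}$ is $\mathbb{Z}[\nu]$-linear and sends $u_{k}\mapsto t_{k}$. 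Its image is then the ideal $(t_{1},\dots,t_{5})\subset\mathbb{Z}[\nu]$, so $\pi_{0}(\ker\Gamma_{\ell})\cong\mathbb{Z}[\nu]/(t_{1},\dots,t_{5})$. The fibres are connected exactly when this quotient vanishes, that is, when $t_{1},\dots,t_{5}$ generate $\mathbb{Z}[\nu]$.

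For part 2) I would first translate so that $\Gamma$ is a homomorphism with $\Gamma(0)=0$; since $C$ is one-dimensional and $\Gamma$ is non-constant, $\Gamma$ is surjective, and by hypothesis $B:=\ker\Gamma$ is a connected abelian subvariety of dimension $4$ with tangent space $\ker\lambda$, where $\lambda$ is the analytic representation of $\Gamma$. The crucial point is that the element $[\nu]\in\End(\Alb(S))$ attached to the $\mathbb{Z}[\nu]$-structure of $H_{1}(S,\mathbb{Z})$ acts on the tangent space $H^{0}(\Omega_{S})^{*}$ as the scalar $\nu$, and a scalar preserves every linear subspace; in particular it preserves $\ker\lambda$ and hence the subvariety $B$. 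Consequently $[\nu]$ descends to an endomorphism $\bar\nu$ of $C$ with $\Gamma\circ[\nu]=\bar\nu\circ\Gamma$, and applying $\Gamma\circ([\nu]^{2}+[\nu]+3)=0$ together with the surjectivity of $\Gamma$ forces $\bar\nu^{2}+\bar\nu+3=0$, the minimal equation of $\nu$. Thus $\End(C)$ contains a copy of the maximal order $\mathbb{Z}[\nu]=\mathcal{O}_{\mathbb{Q}(\sqrt{-11})}$, so $C$ has complex multiplication by this maximal order. As $\mathbb{Q}(\sqrt{-11})$ has class number one there is a unique elliptic curve with such complex multiplication, whence $C\simeq\mathbb{E}$. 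Identifying $C$ with $\mathbb{E}$ through such an isomorphism, $\Gamma$ becomes an element of $\Hom(\Alb(S),\mathbb{E})$, which by the discussion preceding the statement is exactly the free $\mathbb{Z}[\nu]$-module $\Lambda_{A}^{*}=\bigoplus_{k}\mathbb{Z}[\nu]y_{k}$ with basis $r_{1},\dots,r_{5}$; hence $\Gamma=\Gamma_{\ell}$ for some $\ell\in\Lambda_{A}^{*}$.

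The homological criterion and part 1) are routine once set up. The genuine obstacle is part 2), namely upgrading the a priori isogeny $C\sim\mathbb{E}$ to an actual isomorphism. This is precisely where I would exploit that $\ker\Gamma$ is automatically stable under $[\nu]$ — because $\nu$ acts by a scalar on the tangent space — which manufactures complex multiplication by the full ring of integers on $C$ and lets the class-number-one property of $\mathbb{Q}(\sqrt{-11})$ close the argument.
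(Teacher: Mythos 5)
Your proof is correct, and while part 1) follows essentially the paper's line, part 2) takes a genuinely different route. For part 1) both arguments reduce connectedness of the fibres of $\Gamma_{\ell}$ to whether $\ell(H_{1}(S,\mathbb{Z}))$ is the full ring $\mathbb{Z}[\nu]$ or only the ideal $(t_{1},\dots,t_{5})$; the paper makes the two directions explicit (factoring through $\mathbb{C}/{\bf t}\rightarrow\mathbb{E}$ when the ideal is proper, and building a section from a relation $\sum a_{i}t_{i}=1$ to split $\Alb(S)\simeq\mathbb{E}\times\Ker(\Gamma_{\ell})$), whereas you package both into the general criterion $\pi_{0}(\Ker f)\simeq\Lambda_{B}/\lambda(\Lambda_{A})$ --- same content, slightly more uniform. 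For part 2) the paper picks an isogeny $j:C\rightarrow\mathbb{E}$ (so that $j\circ\Gamma=\Gamma_{\ell}$ lands in the known module $\Hom(\Alb(S),\mathbb{E})$), extracts the content $g$ of $\ell$ to get the Stein factorization $\Gamma_{\ell'}$ via part 1), and concludes $C=\Alb(S)/\Ker(\Gamma)=\Alb(S)/\Ker(\Gamma_{\ell'})=\mathbb{E}$ by uniqueness of the connected fibration; you instead observe that the scalar endomorphism $[\nu]$ preserves $\Ker(\Gamma)$, so $C$ acquires complex multiplication by the maximal order $\mathbb{Z}[\nu]$ of $\mathbb{Q}(\sqrt{-11})$, and class number one forces $C\simeq\mathbb{E}$. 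Your argument buys a conceptual explanation of \emph{why} $C$ must be $\mathbb{E}$ on the nose (it is the unique curve with that CM), at the cost of importing the class-number-one fact; the paper's argument stays entirely inside the already-established description of $\Hom(\Alb(S),\mathbb{E})$ and part 1), and would survive even in a setting where several curves share the same endomorphism ring. Both are complete.
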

\begin{proof}
Let ${\bf t}\subset\mathbb{Z}[\nu]$ be the ideal of $\mathbb{Z}[\nu]$
generated by $t_{1},\ldots,t_{5}$. This ideal satisfies \[
d\Gamma_{\ell}(H_{1}(S,\mathbb{Z}))={\bf t}.\]
The morphism $\Gamma_{\ell}$ factorizes through the natural morphisms
: $\Alb(S)\rightarrow\mathbb{C}/{\bf t}$ and $\mathbb{C}/{\bf t}\rightarrow\mathbb{E}=\mathbb{C}/\mathbb{Z}[\nu]$.
\\
If ${\bf t}\not=\mathbb{Z}[\nu]$, then the fibres of $\Gamma_{\ell}$
are not connected because the fibres of $\mathbb{C}/{\bf t}\rightarrow\mathbb{E}$
are not connected.\\
Let us recall that $\mathbb{Z}[\nu]$ is a principal ideal domain
: there exist a generator $g$ of ${\bf t}$ and $\ell'\in\Lambda_{A}^{*}$
such that $\ell=g\ell'$. If we replace $\ell$ by $\ell'$, we are
now reduced to the case where ${\bf t}=\mathbb{Z}[\nu]$, $g=1$.
\\
In that case, there exist $a_{1},\ldots,a_{5}\in\mathbb{Z}[\nu]$
such that $\sum a_{i}t_{i}=1$. The homomorphism $\mathbb{E}\rightarrow\Alb(S)$
whose analytic representation is:\[
\begin{array}{ccc}
\mathbb{C} & \rightarrow & H^{0}(\Omega_{S})^{*}\\
z & \mapsto & \sum a_{i}u_{i}\end{array}\]
is a section of $\Gamma_{\ell}$, hence : $\Alb(S)\simeq\mathbb{E}\times\Ker(\Gamma_{\ell})$,
and since $\Alb(S)$ is connected, that implies that the fibre $\Ker(\Gamma_{\ell})$
is connected ; thus $\Gamma_{\ell}$ has connected fibres.\\
Now let be $\Gamma$ as in part 2). The curve $C$ is isogenous
to $\mathbb{E}$ ; let $j:C\rightarrow\mathbb{E}$ be an isogeny.
There exists $\ell$ such that $j\circ\Gamma=\Gamma_{\ell}$. Moreover,
we proved that there exists $\ell'$ such that $\Gamma_{\ell'}$ is
the Stein factorization of $\Gamma_{\ell}=j\circ\Gamma$. As $\Gamma$
and $\Gamma_{\ell'}$ have the same fibres, we see that \[
C=\Alb(S)/\Ker(\Gamma)=\Alb(S)/\Ker(\Gamma_{\ell'})=\mathbb{E}\]
 and $\Gamma=\Gamma_{\ell'}$.
\end{proof}
Let us prove Corollary \ref{corLetC}:
\begin{proof}
Let $\gamma:S\rightarrow C$ be a fibration onto a curve of genus
$>0$. Since the natural morphism $\wedge^{2}H^{0}(\Omega_{S})\rightarrow H^{0}(S,\wedge^{2}\Omega_{S})$
is an isomorphism \cite{Clemens}, the Castelnuovo Lemma implies that
the curve $C$ has genus $1$. \\
Let $\Gamma:\Alb(S)\rightarrow C$ be the morphism such that $\gamma=\Gamma\circ\vartheta$.
The fibres of $\Gamma$ are connected, otherwise their trace on $S\hookrightarrow\Alb(S)$
would be disconnected fibres of $\gamma$. \\
Lemma \ref{lem:Let-be-} 2) implies that $C=\mathbb{E}$ and there
exists a $\ell$ such that $\Gamma=\Gamma_{\ell}$. Moreover, this
$\ell$ satisfies $\ell(H_{1}(S,\mathbb{Z}))=\mathbb{Z}[\nu]$ and
thus defines a point in $\mathbb{P}_{\mathbb{Z}}^{4}(\mathbb{Z}[\nu])$
; this last set is canonically identified with $\mathbb{P}^{4}(\mathbb{Q}(\nu))$.
Thus, we proved that to the numerical class of a fibre of a connected
fibration $\gamma$, we can associate a point of $\mathbb{P}^{4}(\mathbb{Q}(\nu))$
(the numerical class of a fibre determine the fibration).\\
Conversely, by Lemma \ref{lem:Let-be-} 1), to a point of $\mathbb{P}^{4}(\mathbb{Q}(\nu))$,
there corresponds a form $\ell\in\Lambda_{A}^{*}$ (up to a sign)
such that $\Gamma_{\ell}$ has connected fibres. Let $\gamma$ be
the Stein factorization of $\Gamma_{\ell}\circ\vartheta$. We proved
that there is a form $\ell'$ such that $\gamma=\Gamma_{\ell'}\circ\vartheta$
and $\Gamma_{\ell'}$ has connected fibres. Thus $\ell=\ell'$, $\Gamma_{\ell}\circ\vartheta$
has connected fibres and to $\ell$ we associate the fibre $F_{\ell}\in\NS(S)$.
This class $F_{\ell}$ is independent of the choice of $\pm\ell$.
That ends the proof of parts 1) and 2).\\
The point 3) is a reformulation of Lemma \ref{base + simple}.
\end{proof}

Xavier Roulleau, \\
Graduate School of Mathematical Sciences,\\
University of Tokyo,\\
3-8-1 Komaba, Meguro,\\
 Tokyo, 153-8914\\
 Japan\\
 roulleau@ms.u-tokyo.ac.jp %

\end{document}